\DeclareMathOperator{\Gal}{Gal}
\DeclareMathOperator{\Jac}{Jac}
\DeclareMathOperator{\GCD}{gcd}
\DeclareMathOperator{\Gl}{Gl}
\DeclareMathOperator{\Aut}{Aut}
\DeclareMathOperator{\Iso}{Isom}
\DeclareMathOperator{\Pic}{Pic}
\DeclareMathOperator{\id}{id}
\DeclareMathOperator{\Spec}{spec}
\newtheorem{cor}{Corollary}
\newtheorem{ex}{Example}
\newtheorem{rem}{Remark}
\newtheorem{lem}{Lemma}
\newtheorem{defin}{Definition}
\newtheorem{theorem}{Theorem}
\newcommand{\overbar}[1]{\mkern 1.5mu\overline{\mkern-1.5mu#1\mkern-1.5mu}\mkern 1.5mu}
\begin{document}
\title{L-functions of Genus Two Abelian Coverings of Elliptic Curves over Finite Fields}
\author{Pavel Solomatin}
\date{ Leiden, 2016}
\maketitle

\begin{abstract}
Initially motivated by the relations between Anabelian Geometry and Artin's L-functions of the associated Galois-representations, here we study the list of zeta-functions of genus two abelian coverings of elliptic curves over finite fields. Our goal is to provide a complete description of such a list. 
\end{abstract}

\textbf{\\ \\ \\ \\ \\ \\ \\ \\ \\ \\ \\  \\Acknowledgements:} 
First, I would like to thank my advisor, professor Bart de Smit for his invitation to the topic and interesting discussions during the project. I also want to thank professors Bas Edixhoven, Marco Streng, Hendrik Lenstra and Everett Howe for their useful advices.

\newpage
\section{Introduction}

\subsection{Motivation}
The main task of \emph{Grothendieck's anabelian geometry} is to find relations between an object $X$ and its covering space $Y$. Suppose $X=K$ is a number field, then it is natural to consider $Y$ as absolute Galois group $\mathbb G_K = \Gal( \overbar{ K} / K) = \pi_1^{et} (\Spec{K}) $. As a pro-finite group the Galois group $\mathbb G_K$ has a natural topology and it is a topological group. It is a celebrated theorem due to Neukrich and Uchida which states that in this case isomorphism as topological groups of absolute Galois groups $\mathbb G_K =\Gal( \overbar{K} / K) $ and $\mathbb G_L =\Gal( \overbar{ L} / L) $ implies $K \simeq L$. On the other hand there exists an example which shows that it is not enough to consider abelianization $\mathbb G_K^{ab}$ of $\mathbb G_K$ to recover isomorphism class: there are examples of non-isomorphic imaginary quadratic fields with isomorphisms between their abelianized absolute Galois groups, see ~\cite{Peter1}. Nonetheless, it was proven in ~\cite{Quant} by using what authors called \emph{quantum statistical mechanical systems}  that if one has isomorphism of character groups $\widehat{\mathbb G_K^{ab} }\simeq \widehat{ \mathbb G_L^{ab}}$ preserving L-functions, then $K \simeq L$. 
Also, a similar result holds for the function field case, see ~\cite{Quant2}. Moreover, in the number field case Bart de Smit(unpublished) proved that it is possible to construct a single character of order three such that corresponding L-function determines field $K$ up isomorphism.

Motivated by those results we asked ourselves the following informal question: how many Artin's L-functions associated to the representations of $\mathbb G_K^{ab}$  should one know to recover the field up to isomorphism in the function field case?
Here we report about our first result in this direction. 

\subsection{Settings}
Let $k = \mathbb F_q$ be a finite field with $q=p^{n}$, where $p$ is prime. Let $C$ be a curve over $k$ and let $d$ be a natural number prime to $p$. By a curve we always mean smooth projective geometrically connected variety of dimension 1 over $k$. To such a curve one associates the set $\mathbb X_{C}(d,g)$ of all isomorphism classes of smooth projective abelian Galois covers of degree $d$ and genus $g$: 
\begin{defin}
$\mathbb X_{C}(d,g) = \{X $ is an isomorphism class of a curve over $k$, such that $g(X) = g$ and there exists an abelian Galois-covering $\phi : X \to C$, defined over $k$ and of degree $d   \}$. 
\end{defin}

On the function field level, any element $X$ in $\mathbb X_{C}(d,g)$ corresponds to an abelian extension of degree $d$ of the field of functions $k(C)$ of $C$. Let us denote such a Galois group by $G$. According to the standard formalism of Artin's L-functions we have a decomposition law: the ratio of zeta-functions of $X$ and $C$ is equal to the product of all L-functions over all non-trivial characters\footnote{since $G$ is abelian.} of $G$.

Because of the interaction of algebraic geometry and the class-field theory, we have a lot of explicit information about $\mathbb X_{C}(d,g)$. For instance, unramified geometrically connected abelian coverings of $C$ are parametrized by the $\Pic^{0}(C)$ and ramified coverings with fixed ramification divisor $m$ are parametrized by the ray-class group associated to $m$. Later we will discuss it in detail.

Let us consider the set of all zeta-functions $\zeta_{X}(T)$ of curves $X$ in $\mathbb X_C(d,g)$. For any fixed $C, d$ and $g$ this is a finite set of functions. By a famous theorem of A. Weil, they are rational functions of the form $$ \zeta_{X} (T) = \frac{f_{X}(T)}{ (1-T) (1-qT) },$$ where $f_{X}(T) \in \mathbb Z[T]$ is the Weil-polynomial of the covering curve $X$. Such a polynomial keeps a lot of information about $X$, for example we refer reader to the following classical theorem due to Honda and Tate, see ~\cite{Howe1}:  

\begin{theorem}
Let $\Jac(X)$ denote the Jacobian variety of the curve $X$ over $\mathbb F_q$. Let $X'$ denote another curve over $\mathbb F_q$. Then the following are equivalent: 

\begin{enumerate}
	\item $\Jac(X)$ and $\Jac(X')$ are $\mathbb F_q$-isogenous;
	\item The Weil polynomials of $X$ and $X'$ are equal: $f_X(T) = f_{X'}(T)$.
\end{enumerate}

\end{theorem}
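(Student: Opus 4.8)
The plan is first to translate the statement into one purely about abelian varieties. Writing $\zeta_X(T)=f_X(T)/\big((1-T)(1-qT)\big)$ with $f_X(T)=\prod_{i=1}^{2g}(1-\alpha_i T)\in\mathbb Z[T]$, Weil's theory identifies the $\alpha_i$ with the eigenvalues of the $q$-power Frobenius endomorphism $\pi_X\in\mathrm{End}(\Jac X)$ acting on the rational $\ell$-adic Tate module $V_\ell\Jac(X):=T_\ell\Jac(X)\otimes_{\mathbb Z_\ell}\mathbb Q_\ell$, for any prime $\ell\neq p$; equivalently $f_X$ is the reciprocal polynomial of the (integral, $\ell$-independent) characteristic polynomial $P_X(t):=\det\big(t-\pi_X\mid V_\ell\Jac(X)\big)$, using the canonical Galois-equivariant identification $H^1_{et}(X)\cong H^1_{et}(\Jac X)$. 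So the theorem becomes: two abelian varieties over $\mathbb F_q$ are $\mathbb F_q$-isogenous if and only if the characteristic polynomials of their Frobenius endomorphisms coincide.

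For $(1)\Rightarrow(2)$ I would argue directly. An $\mathbb F_q$-isogeny $\phi\colon\Jac(X)\to\Jac(X')$ has finite kernel, hence induces an injection of Tate modules that becomes a $\mathbb Q_\ell$-linear isomorphism $V_\ell\Jac(X)\xrightarrow{\ \sim\ }V_\ell\Jac(X')$ after $\otimes\mathbb Q_\ell$ (both sides have dimension $2g$); being defined over $\mathbb F_q$, $\phi$ commutes with Frobenius, so this isomorphism is Frobenius-equivariant and therefore $P_X=P_{X'}$, i.e. $f_X=f_{X'}$. One can also see this at the level of point counts, since isogenous abelian varieties over $\mathbb F_q$ acquire the same number of rational points over every finite extension, which already determines $\zeta_X$ and hence $f_X$.

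The substance is the converse $(2)\Rightarrow(1)$, which is Tate's isogeny theorem. The plan is to invoke Tate's result that for abelian varieties $A,B$ over a finite field $k$ the natural map $\mathrm{Hom}_k(A,B)\otimes_{\mathbb Z}\mathbb Z_\ell\longrightarrow\mathrm{Hom}_{\Gal(\overbar{k}/k)}\!\big(T_\ell A,T_\ell B\big)$ is an isomorphism, together with the fact (also due to Tate) that Frobenius acts semisimply on $V_\ell A$, so that the $\mathbb Q_\ell[\pi]$-module $V_\ell A$ is determined up to isomorphism by $P_A$. From $f_X=f_{X'}$ one first passes, via Poincaré reducibility, to the comparison of the simple isogeny factors of $\Jac X$ and $\Jac X'$ together with their multiplicities; equality of the characteristic polynomials forces the semisimple Frobenius-modules $V_\ell\Jac(X)$ and $V_\ell\Jac(X')$ to be isomorphic, Tate's theorem promotes such an isomorphism to an element of $\mathrm{Hom}_k(\Jac X,\Jac X')\otimes\mathbb Q_\ell$, and a routine approximation and denominator-clearing argument (the isogenies forming a Zariski-dense open locus, once one such $\mathbb Q_\ell$-point is available) produces an honest $\mathbb F_q$-isogeny.

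The hard part is precisely this last step: Tate's isogeny theorem is a genuinely deep input, whose own proof rests on the finiteness of the set of isomorphism classes of abelian varieties $\ell$-power-isogenous to a fixed one, and on the existence of polarizations. I would not reprove it here but cite Tate's original paper or \cite{Howe1}. By contrast, the direction $(1)\Rightarrow(2)$ and the dictionary between $f_X$ and $P_X$ are entirely formal. Note also that only the isogeny-versus-Weil-polynomial equivalence is claimed; Honda's complementary existence statement — that every $q$-Weil polynomial satisfying the obvious constraints actually occurs — is not needed and plays no role in the argument.
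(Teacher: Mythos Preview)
Your outline is correct, but note that the paper does not actually prove this theorem: it is stated as a classical result attributed to Honda and Tate, with a reference to \cite{Howe1}, and no argument is given. Your sketch therefore goes well beyond what the paper supplies. The translation to characteristic polynomials of Frobenius on $V_\ell$, the easy direction $(1)\Rightarrow(2)$ via Frobenius-equivariance of the induced map on Tate modules, and the reduction of $(2)\Rightarrow(1)$ to Tate's isogeny theorem together with semisimplicity of Frobenius are all standard and correctly stated. You are also right that Honda's existence result plays no role in the equivalence as stated --- despite the paper's attribution, what is being used is purely Tate's theorem.
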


In this settings, suppose $X$ is a $\mathbb F_q$-covering of $C$, then we have associated map between Jacobians: $\Jac(X) \to \Jac(C)$ and therefore $\frac{\zeta_{X}(T) }{\zeta_{C}(T)} =\frac{f_X(T) }{f_C(T)}$ is a polynomial with integer coefficients.  In this paper we consider the set $\Lambda_{C}(d,g)$ of all polynomials $\frac{f_{X}(T)}{f_{C}(T)}$ for $X \in \mathbb X_{C}(d,g)$. 

\begin{defin} We define $\Lambda_{C}(d,g) = \{ \frac{f_{X}(T)}{f_{C}(T)} \in \mathbb Z[T]|  X \in \mathbb X_{C}(d,g) \}$.
\end{defin}

Note that, equality $\mathbb {X}_{C}(d,g) = \mathbb{X}_{C'}(d,g)$ obviously implies equality of $\Lambda_{C}(d,g) = \Lambda_{C'}(d,g)$, but not vice-versa.  

It is a remarkable fact that in the case $d=2$ any element in $\Lambda_{C}(2,g)$ is the unique Artin L-function which corresponds to the unique non-trivial representation of the Galois group of fields extension $\mathbb F_q(X)$ over $\mathbb F_q(C)$. This explains the relation with our original motivation.   

Finally here we study $\Lambda_C(d,g)$ where $C = E$ is an elliptic curve and $g=2$. In other words, we study zeta-functions of genus two abelian coverings of elliptic curves. 

\subsection{Results}
In our research we obtain complete information about $\Lambda_{E}(d,2)$ for an elliptic curve $E$ defined over $\mathbb F_q$ with $q=p^n$, $p$ is prime and $p>3$. 

First we state the following corollary of the Galois theory combining with the Riemann-Hurwitz theorem:

\begin{theorem}
For $d>2$ we have $\Lambda_{E}(d,2) = \emptyset$. 
\end{theorem}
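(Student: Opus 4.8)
The plan is to assume a curve $X\in\mathbb X_E(d,2)$ with $d>2$ exists, take the associated abelian Galois cover $\phi\colon X\to E$ of degree $d$ (recall $\gcd(d,p)=1$, so $\phi$ is tamely ramified), and derive a contradiction. First I would apply the Riemann--Hurwitz formula: since $g(E)=1$ one gets $2g(X)-2=d\bigl(2g(E)-2\bigr)+\deg\mathfrak d_\phi=\deg\mathfrak d_\phi$, where $\mathfrak d_\phi$ is the different divisor of $\phi$, so $\deg\mathfrak d_\phi=2$. In particular $\phi$ must be ramified, as an unramified cover of $E$ has genus $1$. By tameness $\mathfrak d_\phi=\sum_Q(e_Q-1)\,Q$, the sum over the ramified closed points $Q\in X$ with ramification index $e_Q$.

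Next I would exploit that $\phi$ is Galois with abelian group to see the ramification is very constrained. Over a branch closed point $P$ of $E$ the local invariants $e,f,g$ (with $efg=d$) are the same at all $g$ points above $P$, each of which is a closed point of $X$ of degree $f\deg P$ entering $\mathfrak d_\phi$ with coefficient $e-1$; hence $P$ contributes $gf(e-1)\deg P=\tfrac{d}{e}(e-1)\deg P$ to $\deg\mathfrak d_\phi$. Thus
\[ 2=\deg\mathfrak d_\phi=\sum_{P\ \mathrm{branch}}\frac{d}{e_P}(e_P-1)\deg P . \]
Each summand is a positive integer and is $\ge\tfrac{d}{e_P}(e_P-1)=d-\tfrac{d}{e_P}\ge\tfrac d2\ge\tfrac32$, hence $\ge2$. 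So the sum has exactly one term, equal to $2$, and since $\tfrac{d}{e_P}(e_P-1)\ge2$ this forces $\deg P=1$. In other words, everything reduces to the question: \emph{can a nontrivial abelian covering of $E$ over $\mathbb F_q$ be ramified at exactly one $\mathbb F_q$-rational point $P$?}

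To finish I would answer this negatively by class field theory. Being tame and unramified outside $P$, the cover $\phi$ corresponds to a nontrivial finite quotient of the ray class group of $k(E)$ of modulus $P$, which sits in
\[ 1\longrightarrow k(P)^{*}/\mathbb F_q^{*}\longrightarrow \mathrm{Cl}_{P}\bigl(k(E)\bigr)\longrightarrow \Pic(E)\longrightarrow 1 ; \]
the left-hand factor is the image of the local units at $P$, i.e.\ the inertia at $P$ in the maximal abelian extension of conductor dividing $P$, and it is trivial because $\deg P=1$ gives $k(P)=\mathbb F_q$. Hence the inertia at $P$ of $\phi$ is trivial, i.e.\ $\phi$ is unramified, contradicting the previous step. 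Therefore no such $X$ exists and $\Lambda_E(d,2)=\emptyset$.

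The main obstacle is this last step rather than Riemann--Hurwitz: the numerical bound by itself still allows the configurations $(d,e_P)=(3,3)$ and $(4,2)$ with a single rational branch point, and excluding these genuinely requires the abelian (class-field-theoretic) input — equivalently, the fact that the loop around a puncture is a commutator and hence zero in $H_1$ of a once-punctured elliptic curve. One should also be careful, in the second step, with the $\mathbb F_q$-rational bookkeeping: the decomposition $efg=d$ at each branch point and the degrees $f\deg P$ of the closed points of $X$ lying above it.
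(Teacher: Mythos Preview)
Your proof is correct and takes a genuinely different route from the paper's. After the common first step (Riemann--Hurwitz giving $\sum_P \tfrac{d}{e_P}(e_P-1)\deg P=2$ and hence a single rational branch point when $d>2$), the paper passes to $\bar k$, brings in the hyperelliptic involution $\tau$, and analyses the composite degree-$2d$ covers $C\to\mathbb P^1$ case by case for $d=3$ and $d=4$ (with subcases $C_4$ and $C_2\oplus C_2$), obtaining contradictions from several further Riemann--Hurwitz counts and inertia considerations in the tower. You instead stay over $\mathbb F_q$ and kill both residual cases in one stroke via class field theory: since the extension is tame, its conductor is $P$, and the inertia at $P$ is the image of $k(P)^\ast/\mathbb F_q^\ast$ in the ray class group, which is trivial because $\deg P=1$; hence the cover is unramified, contradicting $\deg\mathfrak d_\phi=2$. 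Your approach is shorter and more conceptual (the topological rephrasing --- the loop around the puncture is a commutator in $\pi_1$ of a once-punctured torus --- makes clear why it works uniformly), and it never uses that $C$ has genus $2$ beyond the numerics; the paper's approach is more elementary, using only Riemann--Hurwitz and finite Galois theory, but is longer and genus-specific through its reliance on the hyperelliptic structure.
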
    

Our main result is the theorem for the case $d=2$. For the sake of shortness here we formulate our result for the case $q=p$. It is turned out that there are two different possibilities: $d=2$ and $d>2$. Before we formulate it we need to introduce some notations. 

Let us denote $a_p = p+1 - \# E(\mathbb F_p)$. Moreover we have the following notations for the sets of polynomials:  

\begin{enumerate}
	\item $A_E =  \{ (pT^2-a'_pT + 1)$,  for $a'_p$ with $ |a'_p| \le 2 \sqrt{p}$, $ a'_p = a_p \bmod (2)  \}$; 
	\item $B_E  = \{ (pT^2-a'_pT + 1)$,  for $a'_p$ with $ |a'_p| \le 2 \sqrt{p}$, $ a'_p = a_p \bmod (4)  \} $.
\end{enumerate}

\begin{theorem}
Assume that $j(E) \ne 0, 1728$. The following holds:
\emph{
\begin{enumerate}
	\item if $E(\mathbb F_p)[2] \not \simeq C_2 \oplus C_2$ then $ \Lambda_{E}(2,2) = A_E $;
	\item	if $E(\mathbb F_p)[2] \simeq C_2 \oplus C_2$  then $ \Lambda_{E}(2,2) = B_E $;
\end{enumerate}
}
\end{theorem}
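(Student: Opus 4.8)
The plan is to trade the cover for an auxiliary elliptic curve and reduce everything to a question about $2$-torsion as a Galois module. \emph{Setting up.} If $\pi\colon X\to E$ has degree $2$ and $g(X)=2$, then $p>2$ makes $\pi$ tame, so Riemann--Hurwitz forces its branch divisor on $E$ to be reduced, effective and of degree $2$ — either a sum of two distinct rational points or one closed point of degree $2$ — and this divisor is exactly the conductor of the quadratic character $\chi$ defining $X$, so $f_X/f_E$ is the Artin $L$-function $L(\chi,T)=pT^2-a'_pT+1$. Writing the hyperelliptic involution $\tau$, the involution $\iota$ of $\pi$ and $\sigma=\iota\tau$, the standard genus count for the resulting Klein four-group gives $X/\tau\cong\mathbb P^1$ and $X/\sigma=:E'$ an elliptic curve, so $\Jac(X)$ is isogenous to $E\times E'$ and, by isogeny invariance of the Frobenius characteristic polynomial, $f_X(T)/f_E(T)=f_{E'}(T)$; thus $a'_p=a_p(E')$. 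More precisely (Frey--Kani), $\Jac(X)$ is $E\times E'$ modulo the graph of a $\Gal(\overbar{\mathbb F}_p/\mathbb F_p)$-equivariant isomorphism $\phi\colon E[2]\xrightarrow{\ \sim\ }E'[2]$, the pairing compatibility being automatic on $2$-torsion; conversely, for $E'/\mathbb F_p$ with $E[2]\cong E'[2]$ and a suitable $\phi$, such a gluing is the Jacobian of a genus $2$ curve mapping $2$-to-$1$ onto $E$ and onto $E'$. Hence, up to the question of which $\phi$ actually produce Jacobians rather than polarized products, $\Lambda_E(2,2)=\{f_{E'}:E'/\mathbb F_p,\ E[2]\cong E'[2]\text{ as Galois modules}\}$.

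\emph{The congruence constraints.} As a $\Gal(\overbar{\mathbb F}_p/\mathbb F_p)$-module, $E[2]$ is determined by the conjugacy class of Frobenius in $\mathrm{GL}_2(\mathbb F_2)\cong S_3$, equivalently by $t:=\dim_{\mathbb F_2}E(\mathbb F_p)[2]\in\{0,1,2\}$: one has $t=0\Leftrightarrow a_p$ odd, $t\ge1\Leftrightarrow a_p$ even, and $t=2\Leftrightarrow E[2]\subseteq E(\mathbb F_p)$. Since the complementary $E'$ satisfies $E'[2]\cong E[2]$, it has the same $t$; in particular $a_p(E')\equiv a_p\pmod 2$, which gives $\Lambda_E(2,2)\subseteq A_E$ in both cases. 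If moreover $E(\mathbb F_p)[2]\cong C_2\oplus C_2$, then $E'$ too has full rational $2$-torsion, so $4\mid\#E(\mathbb F_p)$ and $4\mid\#E'(\mathbb F_p)$, whence $a_p(E')\equiv p+1\equiv a_p\pmod 4$ and $\Lambda_E(2,2)\subseteq B_E$.

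\emph{Realizing everything.} For the reverse inclusions I would exhibit, for each $a'$ in the Hasse interval of the prescribed parity (resp.\ class mod $4$), an elliptic curve $E'/\mathbb F_p$ of trace $a'$ carrying the correct $2$-torsion type, together with a $\phi$ for which the gluing is a genuine genus $2$ Jacobian. Existence of an elliptic curve over $\mathbb F_p$ ($p>3$) of any prescribed trace in the Hasse interval is Deuring--Waterhouse; the finer point is the type of $E'[2]$, and here the key observation is that a curve in an isogeny class has $E'[2]$ rational exactly when its endomorphism ring contains $\mathbb Z\bigl[\tfrac{\pi'-1}{2}\bigr]$, which is an order of the CM field precisely when $a'$ is even and $a'\equiv p+1\pmod 4$ — the arithmetic origin of the ``mod $4$'' dichotomy. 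Combining this with the fact that every order between $\mathbb Z[\pi']$ and the maximal order occurs as an endomorphism ring in an ordinary isogeny class (with explicit supersingular curves covering $a'=0$), one obtains a curve of the right type for every admissible $a'$; a non-square quadratic twist of $E'$ negates the trace while preserving the $2$-torsion type, supplying the symmetry $a'\leftrightarrow-a'$. As for the gluing, the exceptional $\phi$ (those for which $(E\times E')/\Gamma_\phi$ is a polarized product) are few, and because $j(E)\neq0,1728$ forces $\Aut(E)=\{\pm1\}$, acting trivially on $E[2]$, one can always choose a non-exceptional $\phi$ among the $\ge 2$ available ones.

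\emph{Main difficulty.} The reverse inclusion is the crux: it asks to prescribe the trace of $E'$ and the $\Gal$-module structure of $E'[2]$ simultaneously — marrying the Honda--Tate--Deuring--Waterhouse description of isogeny classes and their endomorphism rings with the analysis of which orders force rational $2$-torsion — and, in parallel, to decide exactly when the gluing of $E$ with $E'$ along the $2$-torsion is a Jacobian rather than a polarized product. The delicate instances are the curves $E'$ of $j$-invariant $0$ or $1728$, the small primes where the Hasse interval is short, and above all the ``diagonal'' polynomials $f_{E'}$ with $E'$ isogenous to $E$ (so that $f_X=f_E^2$), where the gluing must be handled with care.
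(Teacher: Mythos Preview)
Your plan is correct and follows the same architecture as the paper: reduce via the Kani/Frey--Kani construction to the problem of realizing, for each target trace $a'$, an elliptic curve $E'$ with $E'[2]\cong E[2]$ as Galois modules together with a gluing isomorphism that is \emph{not} the restriction of a geometric isomorphism $E\to E'$; then use the hypothesis $j(E)\neq 0,1728$ (so $\Aut_{\bar k}(E)=\{\pm1\}$ acts trivially on $E[2]$) to guarantee that at most one of the $\geq 2$ Galois-module isomorphisms is exceptional.

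The one substantive difference is in how the $2$-torsion type is controlled inside a given isogeny class. You invoke the endomorphism-ring criterion $E'[2]\subset E'(\mathbb F_p)\Leftrightarrow \mathbb Z\bigl[\tfrac{\pi'-1}{2}\bigr]\subset\mathrm{End}(E')$, together with Waterhouse's theorem that every order between $\mathbb Z[\pi']$ and the maximal order occurs, to produce both a curve with $E'(\mathbb F_p)[2]\cong C_2$ and (when $a'\equiv p+1\pmod 4$) one with $E'(\mathbb F_p)[2]\cong C_2\oplus C_2$. The paper instead argues directly on the $2$-adic Tate module: it shows by an elementary isogeny construction that if $4\mid\#E(\mathbb F_q)$ and $E(\mathbb F_q)[2]\cong C_2$ then some $2$-isogenous $E'$ has full rational $2$-torsion, and conversely that a curve with only $C_2$ exists in the class unless $\pi$ acts as a scalar on $T_2$, i.e.\ unless $a_q=\pm 2\sqrt q$ (vacuous over $\mathbb F_p$). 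Both routes are standard and yield the same conclusion; yours packages the ``mod $4$'' dichotomy more transparently via the integrality of $(\pi'-1)/2$, while the paper's Tate-module argument avoids the separate bookkeeping for the supersingular class $a'=0$ that your sketch flags. Your caution about $E'$ with $j(E')\in\{0,1728\}$ and about the diagonal case $f_{E'}=f_E$ is unnecessary here: since $j(E)\notin\{0,1728\}$, any such $E'$ is not geometrically isomorphic to $E$, so every $\phi$ is non-exceptional, and in the genuinely diagonal case your $\Aut(E)=\{\pm1\}$ argument already suffices.
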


This theorem says that there are only two sufficiently different cases. Moreover, which case occurs is completely determined by the structure of $\mathbb F_p$-rational 2-torsion points on $E$. The same results hold for curves with $j(E)=0$ or $j(E)=1728$ but with possibly a few exceptions in this list:

\begin{theorem}
Assume that $j(E) = 0$ or $1728$. The following holds:
\emph{
\begin{enumerate}
	\item if $E(\mathbb F_p)[2] \not \simeq C_2 \oplus C_2$ then $ \Lambda_{E}(2,2) \subset A_E $; moreover, the number of elements in the difference does not exceed six: $ |A_E / \Lambda_E(2,2) | \le 6$;
	\item	if $E(\mathbb F_p)[2] \simeq C_2 \oplus C_2$  then $ \Lambda_{E}(2,2) \subset B_E $; moreover, the number of elements in the difference does not exceed six: $ |B_E / \Lambda_E(2,2) | \le 6$;
\end{enumerate}
}
\end{theorem}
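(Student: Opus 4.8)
\emph{Proof strategy.}\ The plan is to pass to Prym varieties and translate the problem into one about gluing elliptic curves along their $2$-torsion. By Riemann--Hurwitz a genus two curve $X$ carrying a degree two map $\phi\colon X\to E$ is branched over a divisor of degree two, its Prym variety is an elliptic curve $E'/\mathbb F_p$, and $\Jac(X)\sim E\times E'$, so that $f_X/f_E=f_{E'}(T)=pT^2-a'_pT+1$ with $a'_p=p+1-\#E'(\mathbb F_p)$. Conversely, by the theory of elliptic subcovers of genus two curves (Kani), such an $X$ together with $\phi$ exists over $\mathbb F_p$ exactly when $\Jac(X)\cong(E\times E')/\Delta_\psi$ for the graph $\Delta_\psi$ of an isomorphism $\psi\colon E[2]\xrightarrow{\sim}E'[2]$ of group schemes over $\mathbb F_p$ — every such $\psi$ is automatically compatible with the Weil pairings since $\mathrm{Sp}_2(\mathbb F_2)=\mathrm{GL}_2(\mathbb F_2)$ — and $X$ is smooth precisely when $\psi$ is not the restriction of a geometric isomorphism $E\to E'$. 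Thus $\Lambda_E(2,2)$ is exactly the set of $f_{E'}(T)$ for which such an ``admissible'' $\psi$ exists over $\mathbb F_p$, and all that remains is to determine this set.

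The inclusion $\Lambda_E(2,2)\subseteq A_E$ (resp.\ $B_E$) is the same argument as in the case $j(E)\ne 0,1728$ and makes no use of $j(E)$: an $\mathbb F_p$-isomorphism $\psi\colon E[2]\to E'[2]$ forces $E[2]\cong E'[2]$ as $\Gal(\overbar{\mathbb F_p}/\mathbb F_p)$-modules, hence $f_E\equiv f_{E'}\pmod 2$, i.e.\ $a'_p\equiv a_p\pmod 2$; and if $E(\mathbb F_p)[2]\simeq C_2\oplus C_2$ then Frobenius acts trivially on $E[2]$, hence on $E'[2]$, so $4\mid\#E'(\mathbb F_p)$ and $a'_p\equiv p+1\equiv a_p\pmod 4$. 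Since $f_{E'}$ is a Weil polynomial we also have $|a'_p|\le 2\sqrt p$, which gives the inclusion.

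For the reverse inclusion, fix a target $pT^2-a'_pT+1$ in $A_E$ (resp.\ $B_E$). By Honda--Tate there is an isogeny class over $\mathbb F_p$ with this Weil polynomial, and by the classification of the curves in an isogeny class by their endomorphism rings one can, using the congruence satisfied by $a'_p$, choose a curve $E'$ in this class with $E[2]\cong E'[2]$ as Galois modules: if $f_E\bmod 2$ is irreducible any $E'$ works; if $f_E\bmod 2=(T-1)^2$ the required non-semisimple, resp.\ trivial, Frobenius module is realized by the curve with endomorphism ring $\mathbb Z[\pi]$, resp.\ by a curve whose endomorphism ring contains $\mathbb Z[\tfrac{\pi-1}{2}]$ — the latter being an order precisely when $a'_p$ meets the mod $4$ condition defining $B_E$, which is exactly where that condition is used. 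If $j(E')\ne j(E)$ there is no geometric isomorphism $E\to E'$ at all, so any $\mathbb F_p$-isomorphism $\psi\colon E[2]\to E'[2]$ is admissible; gluing produces a smooth (hence geometrically connected) genus two curve $X/\mathbb F_p$ with a degree two map to $E$ and $f_X/f_E=f_{E'}$, so this polynomial lies in $\Lambda_E(2,2)$. Consequently the only polynomials that can be missing are those for which the chosen $E'$ satisfies $j(E')=j(E)\in\{0,1728\}$, i.e.\ those attached to the twists of $E$; for $p>3$ there are at most six such twists (six when $j=0$, four when $j=1728$), so they account for at most six Weil polynomials, and $|A_E/\Lambda_E(2,2)|\le 6$ (resp.\ $|B_E/\Lambda_E(2,2)|\le 6$).

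The step I expect to be hardest is the third one, in two respects. First, one must produce, for \emph{every} admissible $a'_p$, an elliptic curve in the corresponding isogeny class whose $2$-torsion carries exactly the prescribed Galois-module structure; this is the endomorphism-ring bookkeeping above, and the supersingular polynomial $pT^2+1$ — which occurs only for $p\equiv 3\pmod 4$ in the $B_E$ case — must be handled by hand, using that the supersingular curve over $\mathbb F_p$ of maximal endomorphism ring acquires full rational $2$-torsion. Second, one has to be sure the exceptional set is controlled exactly by geometric isomorphisms $E\cong E'$: one checks that these force $E'$ to be a twist of $E$, counts twists ($\le 6$ for $p>3$), and verifies that distinct exceptional twists contribute distinct polynomials; showing that some of these twists are after all realized, by a well-chosen admissible $\psi$, would only sharpen the bound and is not needed here.
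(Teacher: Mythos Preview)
Your strategy is correct and matches the paper's at the top level: both reduce via Kani's theorem to the question of which $a'_p$ admit an elliptic curve $E'/\mathbb F_p$ with $E[2]\cong E'[2]$ as Galois modules together with an isomorphism not coming from $\Iso_{\bar k}(E,E')$, and both observe that only twists of $E$ can cause trouble, whence the bound of six.

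The genuine difference is in how you produce, inside a given isogeny class, a curve with a \emph{prescribed} Galois module $E'[2]$. The paper does this by hand with $2$-isogenies: starting from any $E'$ in the class it passes to $E'/\langle 2P\rangle$ to force full rational $2$-torsion, and conversely analyses when a curve with only a $C_2$ worth of rational $2$-torsion exists by studying whether $\pi$ acts as a scalar on $T_2$ (equivalently $a_q=\pm 2\sqrt q$, which never happens for $q=p$). Your route is instead via Deuring's parametrisation of the isogeny class by orders containing $\mathbb Z[\pi]$: the minimal order $\mathbb Z[\pi]$ gives $E'(\mathbb F_p)[2]\simeq C_2$, while any order containing $\tfrac{\pi-1}{2}$ gives full rational $2$-torsion, and the latter is an order precisely under the $B_E$ congruence. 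This is slicker and more uniform---it also handles the supersingular value $a'_p=0$ cleanly, since for $p\equiv 3\pmod 4$ the maximal order of $\mathbb Q(\sqrt{-p})$ contains $(1+\pi)/2=1+\tfrac{\pi-1}{2}$---but it imports Deuring/Waterhouse as a black box, whereas the paper's isogeny argument is entirely elementary.

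Two small remarks. First, your sentence ``the only polynomials that can be missing are those for which the chosen $E'$ satisfies $j(E')=j(E)$'' tacitly uses that for each $a'_p$ you may choose $E'$ with $j(E')\ne j(E)$ whenever such a curve exists in the class with the right $2$-torsion; say this explicitly, since otherwise the bound by the number of twists is not immediate. Second, in case~2 the paper's Theorem~\ref{GalM} actually shows that for $E(\mathbb F_p)[2]\simeq C_2\oplus C_2$ one can \emph{always} find an admissible $\psi$ even when $j(E')=j(E)\in\{0,1728\}$, so the difference there is in fact empty; the bound of six is needed only in case~1.
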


During the proof we will provide explicit geometric criteria how to find all possible exceptions.  Also we will explain how to extend those results to the case $q=p^n$, with $n>1$. Roughly speaking for a general field, we also have two different cases depending on the group structure on $E(\mathbb F_q)[2]$, but now we have a little bit more restrictions on possible values of $a'_q$, for details see section \ref{prf}. The proof is based on some classical results concerning geometry of bielliptic curves. More concretely, the main ingredient in our proof is the following result: 
\begin{theorem}
We have a surjective map from the set $\Lambda_E(2,2)$ to the set of pairs $(E', \alpha)$, where $E'$ is an elliptic curve over $\mathbb F_q$ and $\alpha: E[2] \simeq E'[2]$ is an isomorphism between Galois module structure on two-torsion points of $E$ and $E'$, such that $\alpha$ is not the restriction of a geometric isomorphism between $E$ and $E'$.
\end{theorem}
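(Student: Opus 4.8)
The plan is to identify $\Lambda_E(2,2)$ with the set of Weil polynomials of genus-two curves carrying a degree-two elliptic subcover mapping to $E$, to read off from such a curve the complementary elliptic curve $E'$ together with a canonical Galois-equivariant isomorphism $\alpha\colon E[2]\xrightarrow{\sim}E'[2]$ via the classical theory of split Jacobians, and to pass freely between curves and their Weil polynomials using the Honda--Tate theorem recalled above. The map of the statement will be built in the first direction and its surjectivity proved by running the construction backwards.

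An element of $\mathbb X_E(2,2)$ is a smooth projective geometrically connected genus-two curve $X/\mathbb F_q$ together with a degree-two $\mathbb F_q$-covering $\phi\colon X\to E$. By Riemann--Hurwitz the ramification divisor of $\phi$ has degree two, so the covering involution $\sigma$ (the nontrivial element of $\Gal(X/E)$) is distinct from the hyperelliptic involution $\iota$; since $\iota$ is central in $\Aut(X)$, the two generate a subgroup $\cong C_2\times C_2$ over $\mathbb F_q$, and the third involution $\sigma\iota$ has a genus-one quotient whose Jacobian $E'$ is an elliptic curve over $\mathbb F_q$ with $\Jac(X)\sim E\times E'$ over $\mathbb F_q$ (Kani--Rosen, Accola). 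Honda--Tate then gives $f_X/f_E=f_{E'}=qT^2-a'_qT+1$. Putting $X$ in bielliptic normal form $y^2=f(x^2)$ with $f$ a separable cubic, one has $E\colon Y^2=f(X)$ and $E'\colon Y^2=Xf(X)$; identifying the nonzero points of $E[2]$ with the roots of $f$ and those of $E'[2]$ with the three pairings of $\{0\}\cup\{\text{roots of }f\}$ yields a canonical Galois-equivariant group isomorphism $\alpha\colon E[2]\xrightarrow{\sim}E'[2]$, and it is classical (Kani) that $\Jac(X)\cong(E\times E')/\Gamma_\alpha$, where $\Gamma_\alpha=\{(P,\alpha(P)):P\in E[2]\}$. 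As a Jacobian, $\Jac(X)$ is geometrically indecomposable as a principally polarized surface, so $\Gamma_\alpha$ is not the graph of an isomorphism; equivalently, $\alpha$ is not the restriction of a geometric isomorphism $E\cong E'$. Choosing for each $\lambda\in\Lambda_E(2,2)$ a representative $X$ with its bielliptic structure thus produces the asserted pair $(E',\alpha)$ with $f_{E'}=\lambda$.

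For surjectivity, let $E'/\mathbb F_q$ and a non-geometric Galois-equivariant isomorphism $\alpha\colon E[2]\xrightarrow{\sim}E'[2]$ be given. Then $\Gamma_\alpha\subset(E\times E')[2]$ is Galois-stable, and since the Weil pairing on two-torsion is the unique nondegenerate alternating pairing, with values in $\{\pm1\}$ and hence preserved by every $\mathbb F_2$-linear automorphism, $\Gamma_\alpha$ is maximal isotropic for the product polarization; so $A:=(E\times E')/\Gamma_\alpha$ carries a principal polarization over $\mathbb F_q$ and is isogenous to $E\times E'$. Over $\overbar{\mathbb F_q}$ the surface $A$ is indecomposable as a principally polarized abelian variety — this is exactly where non-geometricity of $\alpha$ is used — hence is the Jacobian of a smooth genus-two curve; by the Torelli theorem for genus two over an arbitrary field, $A=\Jac(X)$ for a unique smooth projective geometrically connected genus-two curve $X/\mathbb F_q$. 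The image of $E\times\{0\}$ in $A$ is an elliptic subvariety isomorphic to $E$ on which the principal polarization of $A$ restricts to a polarization of degree four, so by Kani's correspondence between elliptic subvarieties of a genus-two Jacobian and elliptic subcovers there is a degree-two $\mathbb F_q$-morphism $X\to E$. Hence $X\in\mathbb X_E(2,2)$ and $f_X/f_E=f_{E'}$, so $(E',\alpha)$ lies in the image.

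Everything above apart from one point is bookkeeping of fields of definition: that $E'$ is an elliptic curve and not merely a genus-one torsor, that the curve produced by Torelli is geometrically connected and defined over $\mathbb F_q$, and that the degree-two cover descends. The one substantive ingredient, used in both directions, is the dichotomy that $(E\times E')/\Gamma_\alpha$ is geometrically the Jacobian of a smooth genus-two curve precisely when $\alpha$ is non-geometric; I would obtain this from Kani's work on curves of genus two with split Jacobian (together with Kani--Rosen and Howe--Leprévost--Poonen), specialized to gluing along two-torsion, where the usual anti-isometry hypothesis on the Weil pairing is automatic. This is also the point whose sensitivity to extra automorphisms of $E$ is responsible for the finitely many exceptional polynomials in the cases $j(E)\in\{0,1728\}$ treated in the subsequent theorems.
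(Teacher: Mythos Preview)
Your proof is correct and follows essentially the same route as the paper: both directions rest on Kani's criterion (stated in the paper as the theorem that $(E\times E')/\Gamma_\alpha$ is the Jacobian of a smooth genus-two curve if and only if $\alpha$ is not the restriction of a geometric isogeny of degree $n(n-i)$, specialized here to $n=2$, $i=1$), together with the observation that for $n=2$ the anti-isometry condition on the Weil pairing is automatic. The paper simply cites Kani's result as a black box and reads off the corollary, whereas you unpack more of the underlying construction (the bielliptic normal form, the quotient by $\Gamma_\alpha$, Torelli over $\mathbb F_q$, and the passage from elliptic subvariety to elliptic subcover), but the logical skeleton is identical.
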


The paper has the following structure: in the next section we show and explain some experimental data for elliptic curves over $\mathbb F_5$. Next we will show how to prove our theorem for $d=2$. Then we will explain cases $d>2$.

\section{Explanations, calculations and examples}
In this section we are going to study the set $\Lambda_{E}(2,2)$ for an elliptic curve $E$ defined over $\mathbb F_q$. Note that any degree 2 covering is actually a Galois covering. Hence, we could use a well-known geometric theory. A good reference here is ~\cite{Kani1}.   
\subsection{Preliminares}
Let $E$ be an elliptic curve over $\mathbb F_q$, with characteristic $p>3$. Let $C$ be a curve of genus $g(C) = 2$  together with the covering map $\phi: C \to E$ of degree~2. Such a curve is called a biellptic curve. 
\begin{ex}
If $E$ is given by the affine equation $y^2 = x^3 + ax + b$, then one could take $C$ with affine part defined by $v^2 = u^6 + au^2 + b$ and map $\phi: (x,y) \to (u^2,v)$.
\end{ex}

Since we have a morphism $\phi$ we have associated map of Jacobian varieties: $  \Jac(E) \to \Jac(C)$. Moreover, because $\dim(\Jac(C))=2$ we have: 
\begin{theorem}
The curve $C$ is bielliptic covering of $E$ if and only if the Jacobian variety $\Jac(C)$ of the curve $C$ is (2,2)-isogenous to a product of two elliptic curves $E \times E'$. 
 \end{theorem}

In the assumptions of the theorem it is not difficult to provide explicit construction of $E'$. Namely, since $C$ is a hyper-elliptic we have a unique involution $\tau \in \Aut(C)$, such that $C / \langle \tau \rangle \simeq \mathbb P^{1}$.    
Since it is unique it lies in the center of $\Aut(C)$. Let us denote by $\sigma$ the element of $\Aut(C)$ such that $C / \langle\sigma\rangle \simeq E$. By our assumption it also has order two. Consider the curve $E' = C/ \langle \sigma \tau \rangle$. Now we have $(\sigma \tau)^2 = \sigma \tau  \sigma  \tau = \sigma^2 \tau^2 = 1$, so we have a degree two map $\phi': C \to E'$. Note that $E' \not \simeq \mathbb P^{1}$, since otherwise we have  $\sigma = \id$. Then, by Riemann-Hurwitz $E'$ is an elliptic curve. Note, that we have the following commutative diagram: 

\begin{diagram}
C   &\rTo   &E' \simeq {C}/ \langle \sigma\tau \rangle \\
\dTo &           &\dTo\\
E \simeq {C}/ \langle \sigma \rangle        &\rTo   &\mathbb P^{1}
\end{diagram}

Finally, we claim that $E \times E'$ is (2,2)-isogenous to the Jacobian surface of $C$. For the proof and complete discussion see ~\cite{Kani2} or ~\cite{Isogn}. 

Now, according to Tate's theorem mentioned in the previous section we have the following relation between Weil polynomials: $$f_C(T) = f_{E}(T) f_{E'}(T) = (qT^2-a_qT + 1)(qT^2-a'_qT + 1) ,$$
where $a_q = q+1 - \# E(\mathbb F_q) $ and $a'_q = q+1 - \# E'(\mathbb F_q) $. So, to describe $\Lambda_E(2,2)$ it is enough to find all possible values of $a'_q$. 

In other words we just proved the following result:
\begin{theorem}
There exists a surjective map from the set $\Lambda_E(2,2)$ to the set of numbers $a'_q$ with property that there exists an elliptic curve $E'$ with $a'_q = q+1-\#E'(\mathbb F_q) $ and with property that abelian surface $E \times E'$ is (2,2)-isogenous to the Jacobian surface of smooth projective curve $C$ defined over $\mathbb F_q$.
\end{theorem}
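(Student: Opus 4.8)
The plan is to make the surjection completely explicit and then verify the two properties it must have: that it lands inside the stated target set, and that it covers all of it. First I would describe the map. An element of $\Lambda_E(2,2)$ is by definition a polynomial $p(T) = f_X(T)/f_E(T)$ attached to some $X \in \mathbb X_E(2,2)$, and I would send it to the integer $q + 1 - p(1)$. This is visibly well defined at the level of polynomials, so the whole content is to identify this integer with an admissible $a'_q$ and, conversely, to realize every admissible $a'_q$ in this way.

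For the first part I would unwind the definition of $\mathbb X_E(2,2)$: realize $X$ as a genus-two curve $C$ carrying a degree-two covering $\phi \colon C \to E$ over $\mathbb F_q$; note that $\deg\phi = 2$ forces $\phi$ to be Galois, with deck involution $\sigma$ defined over $\mathbb F_q$ because $\phi$ is; and bring in the hyperelliptic involution $\tau$ of $C$, which is canonical and hence also defined over $\mathbb F_q$. Then, exactly as in the commutative square above, $E' := C/\langle\sigma\tau\rangle$ is an elliptic curve over $\mathbb F_q$ (it is not $\mathbb P^1$, else $\sigma = \mathrm{id}$, and Riemann--Hurwitz forces genus one). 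By the bielliptic criterion above --- equivalently, by the cited results of Kani --- $\Jac(C)$ is $(2,2)$-isogenous over $\mathbb F_q$ to $E \times E'$, and Honda--Tate then yields $f_C(T) = f_E(T)\,f_{E'}(T)$. Hence $p(T) = f_{E'}(T) = qT^2 - a'_q T + 1$ with $a'_q = q + 1 - \#E'(\mathbb F_q)$, so the image of $p$ is exactly this $a'_q$, which by construction belongs to the target set of the statement.

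Surjectivity I would obtain from the same bielliptic criterion run in the opposite direction. Given $a'_q$ in the target set, there is by definition an elliptic curve $E'/\mathbb F_q$ with $a'_q = q + 1 - \#E'(\mathbb F_q)$ and a smooth projective curve $C/\mathbb F_q$ whose Jacobian is $(2,2)$-isogenous to $E \times E'$; here $g(C) = 2$ is automatic since $\dim \Jac(C) = \dim(E \times E') = 2$. The criterion then provides a degree-two covering $C \to E$ over $\mathbb F_q$, so $[C] \in \mathbb X_E(2,2)$, and as before $f_C/f_E = qT^2 - a'_q T + 1$ is an element of $\Lambda_E(2,2)$ that maps to $a'_q$. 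This establishes the claim.

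The one genuinely delicate point, and the step I would treat most carefully, is rationality: everything in sight --- the covering $\phi$, the involutions $\sigma$ and $\tau$, the quotient $E'$, and the $(2,2)$-isogeny --- must be defined over $\mathbb F_q$ rather than merely over $\overline{\mathbb F_q}$, since Honda--Tate is being applied over $\mathbb F_q$. I expect to handle this by noting that $\sigma$ is the unique nontrivial element of the Galois group of $\phi$ and $\tau$ the unique hyperelliptic involution of $C$, so that both are fixed by $\Gal(\overline{\mathbb F_q}/\mathbb F_q)$, and that the $(2,2)$-isogeny furnished by the cited bielliptic results descends to the field over which the covering data is defined. Granting this, the statement amounts to repackaging the discussion preceding it into the form used in the rest of the paper.
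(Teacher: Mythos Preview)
Your proposal is correct and follows essentially the same route as the paper: the paper's proof is the discussion immediately preceding the theorem (construct $E'=C/\langle\sigma\tau\rangle$ from the two involutions, invoke the $(2,2)$-isogeny $\Jac(C)\sim E\times E'$ from Kani, and read off $f_C=f_E f_{E'}$ via Honda--Tate). You are in fact more explicit than the paper on two points it leaves implicit: the surjectivity direction (you correctly invoke the ``if'' half of the bielliptic criterion, Theorem~6, to produce the covering $C\to E$ from the isogeny data) and the $\mathbb F_q$-rationality of $\sigma$, $\tau$, $E'$, and the isogeny, which the paper uses without comment.
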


\subsection{An example over $\mathbb F_5$}
Let us take $q=p=5$. Our task, for any given curve $E$ find all possible values of $a'_5$ as in the above discussion. In order to do that first of all we have to pick a ramification divisor $M$ on $E$ of genus two quadratic cover of $E$. By Riemann-Hurwitz theorem $M$ is of degree two. Then by taking the maximal abelian extension which corresponds to this divisor we obtain a parametrization for all genus two coverings with given ramification data. More concretely from the class-field theory we have the following isomorphism:

$$\phi \colon \Pic_{M}^{0}(E) \to \Gal(F_{M}/F)$$  

Here, $F = \mathbb F_p(E)$ is the function field of $E$, $F_{M}$ is the \emph{Ray class field} corresponding to the pair $(F, M)$ and $Pic_{M}^{0}(E)$ is the ray class group associated to $M$. Hence in order to list all bi-elliptic coverings of $E$ it is enough to list all possible $M$ and for each such $M$ calculate all possible abelian sub-extensions of genus two. By doing that, for any $E$ we provide list of all possible $a'_5$ and compare it with other invariants of $E$. We implement our calculations by using Magma computer algebra system.
Note that $1728 = 3 \mod (5)$ and hence in case $p=5$ we use both values for $j(E)$. Also note that in the table we list isomorphism classes of curves over $k=\mathbb F_5$, not over $\overbar{\mathbb F_5}$.

\begin{table}[]
\centering
\caption{Data for all elliptic curves over $\mathbb F_5$}
\label{my-label}
\begin{tabular}{|l|l|l|l|l|l|}
\hline
Curve $E$            & $j$-invariant & $a_5$ & Values of $a'_5$  & IsSupersingular & $\# \Aut_{k}(E)$ \\ \hline
$y^2 = x^3 + 1$      & 0             & 0     & $0; \pm 2; \pm 4$ & true            & 2         \\ \hline
$y^2 = x^3 + 2$      & 0             & 0     & $0; \pm 2; \pm 4$ & true            & 2         \\ \hline
$y^2 = x^3 + x$      & 3             & 2     & $\pm 2$           & false           & 4         \\ \hline
$y^2 = x^3 + x + 2$  & 1             & 2     & $0; \pm 2; \pm 4$ & false           & 2         \\ \hline
$y^2 = x^3 + x + 1$  & 2             & -3    & $ \pm 1; \pm 3$   & false           & 2         \\ \hline
$y^2 = x^3 + 2x$     & 3             & 4     & $0 ;\pm 2$    & false           & 4         \\ \hline
$y^2 = x^3 + 2x + 1$ & 4             & -1    & $\pm 1; \pm 3$    & false           & 2         \\ \hline
$y^2 = x^3 + 3x$     & 3             & -4    & $0 ; \pm 2$       & false           & 4         \\ \hline
$y^2 = x^3 + 3x + 2$ & 4             & 1     & $\pm 1; \pm 3$    & false           & 2         \\ \hline
$y^2 = x^3 + 4x$     & 3             & -2    & $\pm 2$           & false           & 4         \\ \hline
$y^2 = x^3 + 4x + 1$ & 1             & -2    & $0; \pm 2; \pm 4$ & false           & 2         \\ \hline
$y^2 = x^3 + 4x + 2$ & 2             & 3     & $ \pm 1; \pm 3$   & false           &  2          \\ \hline
\end{tabular}
\end{table}

\subsection{Observations}

From the data provided by the above table one could note that there exist to different patterns: $a_p$ is odd or even. This is not very difficult to explain: 

\begin{lem}
For any fixed $E$ over $\mathbb F_q$, if $ (qT^2+a'_qT+1) \in \Lambda_E(2,2) $ then  $a_q = a'_q \bmod(2)$.
\end{lem}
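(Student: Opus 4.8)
The plan is to reduce the congruence $a_q \equiv a'_q \pmod 2$ to the single assertion that $\#C(\mathbb{F}_q)$ is even, and then extract that parity from the action of the bielliptic involution on the Weierstrass points. Given $(qT^2 + a'_qT + 1)\in\Lambda_E(2,2)$, the theorem of the previous subsection supplies a smooth projective genus-two curve $C$ over $\mathbb{F}_q$ with a degree-two map $\phi\colon C\to E$ and an elliptic curve $E'=C/\langle\sigma\tau\rangle$, where $\sigma$ is the nontrivial deck transformation of $\phi$ and $\tau$ is the hyperelliptic involution; moreover $\Jac(C)$ is isogenous to $E\times E'$, so by Tate's theorem $f_C(T)=f_E(T)f_{E'}(T)$ and $f_{E'}(T)=qT^2+a'_qT+1$. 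Comparing the linear coefficients of $f_C=f_Ef_{E'}$ gives $\#C(\mathbb{F}_q)\equiv q+1-a_q-a'_q\pmod 2$; since $q$ is odd, $q+1$ is even, so it suffices to prove that $\#C(\mathbb{F}_q)$ is even.

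Next I would compute $\#C(\mathbb{F}_q)\bmod 2$ in the standard way on a hyperelliptic model $C\colon y^2=f(x)$ with $f$ squarefree of degree $5$ or $6$: splitting each affine fibre according to whether $f(x_0)$ is $0$, a nonzero square, or a nonsquare, and accounting for the points over infinity, every contribution except that of the rational Weierstrass points is even. Hence $\#C(\mathbb{F}_q)$ is congruent mod $2$ to the number of $\mathbb{F}_q$-rational Weierstrass points of $C$, and the lemma reduces to showing that $C$ has an even number of rational Weierstrass points.

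For this I would exploit the bielliptic structure. The commuting involutions $\sigma,\tau$ generate a subgroup $V\cong(\mathbb{Z}/2)^2\le\Aut(C)$ with $C/V=(C/\tau)/\bar\sigma=\mathbb{P}^1$, so $C\to C/V$ is a Galois cover of degree $4$, tame because $p>3$. The six Weierstrass points form the set $W=\mathrm{Fix}(\tau)$; a point of $W$ fixed by $\sigma$ would be fixed by all of $V$, hence totally ramified in $C\to C/V$, which would force the inertia subgroup — cyclic, by tameness — to equal $V$, impossible since $(\mathbb{Z}/2)^2$ has no cyclic subgroup of order $4$. Therefore $\sigma$ acts on $W$ without fixed points, partitioning it into three pairs, and this partition is $\Gal(\overline{\mathbb{F}_q}/\mathbb{F}_q)$-stable because $\sigma$ is defined over $\mathbb{F}_q$. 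Consequently the rational Weierstrass points occur in $\sigma$-pairs and their number is even. Chaining the three reductions yields $a_q+a'_q$ even, i.e. $a_q\equiv a'_q\pmod 2$.

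The point count and the eigenvalue bookkeeping are routine; the step that needs genuine care — and which I regard as the crux — is the claim that $\sigma$ fixes no Weierstrass point. The inertia-group argument above handles it cleanly for $p>3$; an alternative is to base change to $\mathbb{F}_{q^2}$, where the involution induced by $\sigma$ on $C/\tau\cong\mathbb{P}^1$ acquires rational fixed points and one may put $C$ in the shape $y^2=f_3(x^2)$, so that the freeness of $\sigma\colon(x,y)\mapsto(-x,y)$ on the Weierstrass points $(\pm s_i,0)$ becomes visible directly. This is, incidentally, a weak shadow of the Galois-module isomorphism $E[2]\simeq E'[2]$ that is stated later as the main ingredient of the paper.
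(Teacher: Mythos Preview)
Your argument is correct, but it takes a longer route than the paper. You and the paper agree on the first reduction: it suffices to show $\#C(\mathbb F_q)$ is even. At that point the paper simply counts $\mathbb F_q$-points fibrewise along $\phi\colon C\to E$: over an unramified rational point of $E$ the fibre contributes $0$ or $2$ rational points, and the branch divisor of $\phi$ on $E$ has degree $2$ (Riemann--Hurwitz, tame since $p>2$), hence consists either of two $\mathbb F_q$-points or of a single closed point of degree $2$; in either case the number of rational branch points, and with it the number of rational ramification points on $C$, is even. That is the whole proof. Your detour through the hyperelliptic model and the $V=\langle\sigma,\tau\rangle$-action on the Weierstrass locus is sound---the inertia argument that $\sigma$ fixes no Weierstrass point is clean---and it does buy something extra, namely the structural fact that $\sigma$ pairs the six Weierstrass points (which, as you note, foreshadows the isomorphism $E[2]\simeq E'[2]$). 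But for the bare parity statement the paper's fibre count over $E$ is decidedly shorter and uses nothing beyond the map $\phi$ itself.
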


\begin{proof}
Consider the covering $\phi: C \to E$ of degree two. From Riemann-Hurwitz theorem we have that ramification divisor of $\phi$ has to be degree two, so it is either a sum of two points of degree one, or a one point of degree two. Here we use the fact that $p>2$ and we don't have so-called wild-ramification. Since $\phi$ is of degree two, we get the number of $\mathbb F_q$-points on $C$ is even. From the decomposition of the Weil polynomial we have $ q+1 - \# C(\mathbb F_q) = q(a_q + a_q')$. Now, just take last equality modulo two and use  the fact that $q$ is odd.
\end{proof}

A second remarkable thing is a some sort of symmetry: if $a'_p$ occurs then also $(-a'_p)$ is in the list. We will explain this phenomena later, but now we note that this is related with quadratic twists of $E$. For proof, see corollary~\ref{Twists}.  

Finally, the last and the main observation is that for \emph{general curve} these are the only restrictions. More contritely, one could note that if $j(E) \ne 0, 1728$ and $E(\mathbb F_p)[2]$ is not isomorphic to the full group $C_2 \oplus C_2$  then any $a'_p = a_p \bmod(2)$ occurs. But if $E(\mathbb F_p)[2] \simeq C_2 \oplus C_2$ then $\Lambda_E(2,2)$ consists of all $a'_p = a_p \bmod(4)$, still provided we are in the case $j(E) \ne 0,1728$. 

Also, the same result holds for $E$ with $j(E)=0, 1728$, but with possible up to $4$ and $6$ exceptions respectively, depending on which twists of $E$ defined over $\mathbb F_p$. Later we will give explicit geometric criteria which answers if there are any exceptions in the list.

\begin{ex}
\begin{enumerate}
	\item Consider the curve $E$ defined by $y^2 = x^3 + x$. In this case $a_5 = 2$, $j(E)=1728$ and it is easy to see that it has four rational two-torsion points: $(0,0)$, $(2,0)$, $(3,0)$ and $\infty$. According to our prediction the only values which may occurs are $\{ \pm 2 \}$. Which is indeed the case.
	\item Consider the curve $E$ defined by $y^2=x^3+1$. Here we have $a_5=0$, $j(E) = 0$ and $E(\mathbb F_5)[2] \simeq C_2$, generated by $(4,0)$. Then we predict that the following values occurs $\{0, \pm 2, \pm 4 \}$. This is coincide with our data.
	\item Consider the curve $E$ defined by $y^2=x^3+3x$. Here we have $a_5=-4$, $j(E)=1728$ and $E(\mathbb F_5)[2] \simeq C_2$, generated by $(0,0)$. But the values $\pm 4$ do not occur in our list. It happens because $j(E)=1728$ and so in this case we have two exceptions.  
\end{enumerate} 
\end{ex}

\subsection{The basic construction}

The crucial fact in our investigation is the following construction due to Kani, see ~\cite{Kani3} and ~\cite{Howe1}. 

Let $n$ be a prime number with $(n,p)=1$. Given two elliptic curves $E$ and $E'$ over $\mathbb F_q$ with isomorphism $\alpha$ as Galois modules $E[n] \simeq E'[n]$, which is anti-isometry with respect to the Weil-paring. Let $\Gamma_{\alpha}$ be the graph of $\alpha$ in $E \times E'$. Consider surface $A_{\alpha} \simeq E \times E' / \Gamma_{\alpha} $. It is $(n,n)$-isogenous to $E \times E'$. Moreover, it turns out that it has \emph{principal polarization} $\theta$ which comes from polarization on $E \times E'$: 

\begin{diagram}
E \times E   &\rTo^{[n]}   & \hat{E} \times \hat{E}   \\
\dTo_{\phi} &           &\uTo_{\hat{\phi}}\\
A _{\alpha}        &\rTo^{\theta}   & \hat{A_{\alpha}}
\end{diagram}

 According to the theorem of A.Weil ~\cite{Weil}: the pair  $(A_{\alpha}, \theta)$ is a polarized Jacobain surface of some, possible not smooth curve $C$ of (arithmetic) genus two.  

\begin{theorem}
The curve $C$ constructed above is smooth if and only if the isomorphism $\alpha$ of Galois modules is not the restriction of a geometric isogeny $\phi$ of degree $d=n(n-i)$ between $E(\bar{k}) \to E'(\bar{k})$, with $0 < i < n$. Moreover, any smooth $C$ such that $\Jac(C)$ is $(n,n)$-isogenous to $E\times E'$ appears in this way. 
\end{theorem}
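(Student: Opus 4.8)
The plan is to analyze when the polarized abelian surface $(A_\alpha, \theta)$ decomposes as a product of polarized elliptic curves, since by Weil's theorem a principally polarized abelian surface over $\bar k$ is the Jacobian of a smooth genus two curve precisely when it is \emph{indecomposable} as a polarized abelian variety (the decomposable case corresponds to $C$ degenerating into a union of two elliptic curves meeting at a point). So the statement reduces to: $(A_\alpha, \theta)$ is decomposable if and only if $\alpha$ is the restriction of a geometric isogeny $\phi\colon E \to E'$ of degree $n(n-i)$ with $0 < i < n$. I would work throughout over $\bar k$ for the geometric analysis, reintroducing the Galois action only at the end, and I would follow Kani's treatment in \cite{Kani3} closely.

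First I would set up the correspondence between elliptic subgroups of $A_\alpha$ and elliptic curves in $E \times E'$ containing $\Gamma_\alpha$, or more precisely sublattices interpolating between $\Gamma_\alpha$ and $(E\times E')[n]$-saturations; the key point is that an elliptic curve $E'' \hookrightarrow A_\alpha$ with $\theta|_{E''}$ of degree $m^2$ pulls back under the isogeny $E\times E' \to A_\alpha$ to an elliptic curve in $E\times E'$, which (being a curve in a product of elliptic curves) is the graph of an isogeny $\psi\colon E \to E'$ up to the obvious degenerate cases $\{0\}\times E'$ and $E\times\{0\}$. Then I would compute, in terms of $\deg\psi$ and the way $\Gamma_\psi$ meets $\Gamma_\alpha$ inside the $n$-torsion, exactly when such a $\psi$ produces a \emph{complementary} pair of elliptic curves splitting $\theta$ — this is where the arithmetic condition $\deg\psi = n(n-i)$, $0<i<n$, comes out: one needs $\Gamma_\psi \cap \Gamma_\alpha$ to have the right order $n$ inside $(E\times E')[n]$, which forces $\psi$ and $\alpha$ to agree on a subgroup of $E[n]$ of order $i$ for some $0 < i < n$, i.e. $\alpha$ extends $\psi$ on that subgroup — and after chasing through Weil-pairing constraints (recall $\alpha$ is an anti-isometry) this is equivalent to $\alpha$ itself being the restriction of such a $\psi$. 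I would also verify the converse direction: given such a geometric isogeny, exhibit the splitting of $A_\alpha$ explicitly by writing down the two complementary elliptic subgroups.

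For the final sentence — that \emph{every} smooth $C$ with $\Jac(C)$ being $(n,n)$-isogenous to $E\times E'$ arises this way — I would argue that such a $(n,n)$-isogeny equips $\Jac(C)$ with a subgroup $K \subset \Jac(C)[n]$ isotropic for the Weil pairing and with $\Jac(C)/K \cong E\times E'$ (or the dual statement), and the principal polarization of $\Jac(C)$ together with the product polarization on $E\times E'$ forces $K$ to be the graph of an anti-isometric isomorphism $E[n]\simeq E'[n]$ after identifying things appropriately; then $A_\alpha$ for that $\alpha$ is, by construction and by the uniqueness in Torelli's theorem, isomorphic as a polarized surface to $\Jac(C)$. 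Finally, to descend from $\bar k$ to $\mathbb F_q$: the curve $C$ is defined over $\mathbb F_q$ precisely when $(A_\alpha,\theta)$ is, which holds because $\alpha$ was assumed to be an isomorphism of \emph{Galois} modules, so $\Gamma_\alpha$ is $\Gal(\bar k/\mathbb F_q)$-stable and the quotient construction descends.

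The main obstacle I anticipate is the bookkeeping in the middle step: pinning down precisely which isogenies $\psi$ yield a genuine splitting of the \emph{polarization} $\theta$ (not merely an isogeny decomposition of the underlying surface) and extracting the clean numerical condition $d = n(n-i)$ with $0<i<n$ from the interaction of degrees, intersection numbers of the graphs inside $(E\times E')[n]$, and the anti-isometry hypothesis on $\alpha$ with respect to the Weil pairing. This is essentially the content of Kani's lemmas in \cite{Kani3}, and I would lean on those rather than redo the computation from scratch; the rest is formal once Weil's criterion (decomposable $\Leftrightarrow$ product Jacobian $\Leftrightarrow$ not a smooth genus two Jacobian) and Torelli are in hand.
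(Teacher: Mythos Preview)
The paper does not prove this theorem at all: it is stated as a known result due to Kani, with citations to \cite{Kani3} and \cite{Howe1}, and is used as a black box input to the subsequent corollary. So there is no ``paper's own proof'' to compare your proposal against.

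That said, your outline is a faithful sketch of Kani's actual argument. The reduction via Weil's dichotomy (a principally polarized abelian surface is either a smooth genus-two Jacobian or a product of two polarized elliptic curves) is exactly the right starting point, and the analysis of elliptic subvarieties of $A_\alpha$ by pulling back along $E\times E' \to A_\alpha$ and interpreting them as graphs of isogenies is precisely how Kani proceeds. You have correctly identified the one nontrivial step: extracting the numerical condition $\deg\psi = n(n-i)$ from the requirement that the elliptic subvariety have self-intersection zero with respect to $\theta$ (equivalently, that it sit inside $A_\alpha$ with the induced polarization being principal). In Kani's paper this is done by an explicit intersection-number computation on $E\times E'$ relating $\deg\psi$, the order of $\Gamma_\psi \cap \Gamma_\alpha$, and $n$; your description of this as ``bookkeeping'' is apt, and leaning on Kani's lemmas here is the right call. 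The Torelli argument for the ``moreover'' clause and the Galois-descent remark at the end are both correct and standard.
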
 

In our case $n=2$ and hence $i=1$, but geometric isogeny of degree one is necessary geometric isomorphism, therefore we have:
\begin{cor}
There exists a subjective map $\Lambda_E(2,2)$ to the set of all $a'_q$ such that there exists an elliptic curve $E'$ over $\mathbb F_q$ with $a'_q = q+1-\# E'(\mathbb F_q)$ and an isomorphism $\alpha$ of Galois modules $E[2]$ and $E'[2]$ such that $\alpha$ is not the restriction of a geometric isomorphism between $E$ and $E'$. 
\end{cor}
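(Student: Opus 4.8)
The plan is to obtain the corollary as a direct specialization of the preceding Theorem on Kani's basic construction to the case $n=2$, combined with the surjection furnished by the earlier Theorem of this section. Concretely, I would first recall that earlier Theorem: it gives a surjective map from $\Lambda_E(2,2)$ onto the set $S$ of those $a'_q$ for which there is an elliptic curve $E'/\mathbb{F}_q$ with $a'_q = q+1-\#E'(\mathbb{F}_q)$ such that $E\times E'$ is $(2,2)$-isogenous to $\Jac(C)$ for some \emph{smooth} projective genus-two curve $C$ over $\mathbb{F}_q$. Hence it suffices to identify $S$ with the set $T$ appearing in the statement, and the corollary then follows by composing the two maps.

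Next I would invoke the preceding Theorem with $n=2$: as remarked just after its statement, the only admissible index is $i=1$ and the excluded geometric isogeny is then necessarily a geometric isomorphism $E\to E'$. The Theorem asserts that the curve $C$ attached by the basic construction to a pair $(E',\alpha)$ is smooth precisely when $\alpha$ is not the restriction of such an isomorphism, while conversely every smooth $C$ with $\Jac(C)$ being $(2,2)$-isogenous to $E\times E'$ arises in this way. Thus $a'_q\in S$ if and only if there exist $E'$ as above and an \emph{anti-isometric} Galois-module isomorphism $\alpha\colon E[2]\xrightarrow{\ \sim\ }E'[2]$ that is not the restriction of a geometric isomorphism. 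The remaining point is to observe that on $E[2]$ the anti-isometry condition is vacuous: the Weil pairing takes values in $\mu_2=\{\pm1\}$, in which every element equals its own inverse, so $e_2'(\alpha x,\alpha y)=e_2(x,y)^{\pm1}=e_2(x,y)$ for every Galois-module isomorphism $\alpha$. Therefore $S=T$, and the composite map $\Lambda_E(2,2)\to T$ is surjective, which is the assertion.

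The one step I would write out with a little care is the descent to the base field: the quotient $A_\alpha=(E\times E')/\Gamma_\alpha$, and hence the curve $C$ produced by Weil's theorem, is defined over $\mathbb{F}_q$ exactly because the graph $\Gamma_\alpha$ is stable under $\Gal(\bar{k}/\mathbb{F}_q)$ — which is precisely the requirement that $\alpha$ be an isomorphism of Galois modules, not merely of $\bar{k}$-group schemes; this is also why the isomorphisms excluded in the statement are the \emph{geometric} ones. Apart from this, the argument is bookkeeping. The only mild obstacle is matching Kani's ``anti-isometry'' hypothesis with the plain ``Galois-module isomorphism'' of the statement, and confirming that for $n=2$ the excluded isogeny collapses to an isomorphism — both settled above — so that no ingredient beyond the two cited Theorems is required.
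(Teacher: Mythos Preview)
Your proposal is correct and takes essentially the same approach as the paper, which derives the corollary in one line by noting that for $n=2$ the only value is $i=1$, so the excluded isogeny of degree $n(n-i)=1$ is necessarily a geometric isomorphism. Your additional care in showing that the anti-isometry condition is vacuous on $2$-torsion (since $\mu_2=\{\pm1\}$) and in addressing descent of the construction to $\mathbb{F}_q$ fills in details the paper leaves implicit, but the route is identical.
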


By working with the Galois module structure on $E[2]$ we provide a proof of our main theorem.

\subsection{On Galois Module Structure on $E[2]$} 

According to the previous section, we must understand which isomorphisms between Galois modules are not restrictions of geometric isomorphisms between curves. In order to do that in this section we briefly recall possible Galois module structures on $E[2]$ and its relations with $\Aut_{\bar k}(E)$. 

Galois group $G_k \simeq \Gal(\bar k / k)$ is generated by the Frobenius element $\pi$. Hence we could restrict our attention to the action of $\pi$ on $E[2]$. Recall that as abelian group $E[2] \simeq \mathbb Z / 2\mathbb Z \oplus \mathbb Z / 2\mathbb Z$. There are three possibilities for the Galois module structure on $E[2]$: either $\pi$ is acting trivially or the action is by a 2-cycle or a 3-cycle. In the first case $E[2]$ has four rational points, in the second case only two and in the later case only one rational point, namely the zero point.

For a given pair of elliptic curves $E, E'$ over $k=\mathbb F_q$ let us consider the set $\Iso_{\bar k}(E,E')$. If it is empty, then $j(E) \ne j(E')$ and any isomorphism between $E[2]$ and $E'[2]$ is not the restriction of a geometric isomorphism. Otherwise, suppose now that $\Iso_{\bar k}(E,E')$ is not empty. Then we have $j(E)=j(E')$ and $|\Iso_{\bar k}(E,E')| = |\Aut_{\bar k}(E)| = |\Aut_{\bar k}(E') | $. Now let $\Iso_{AG}(E[2],E'[2]) $ be the set of isomorphisms between $E[2]$ and $E'[2]$ considered as abelian groups and $\Iso_{G}(E[2],E'[2]) $ be the set of isomorphisms as Galois-modules. 
We have the following:  $$\Iso_{\bar k}(E,E') \to \Iso_{AG}(E[2],E'[2]) \supset \Iso_{G}(E[2],E'[2]) ,$$ where the map is just the restriction of automorphism to the two-torsion points. 

Now we are going to investigate which elements of $\Iso_{G}(E[2],E'[2])$ do not come from restriction of elements of $\Iso_{\bar k}(E,E')$. 

Recall that if $p>3$ then we have exactly the following possibilities: 
\begin{enumerate}
	\item $j(E) \ne 0, 1728$ and $\Aut_{\bar k}(E) = \mathbb Z / 2\mathbb Z$;
	\item $j(E) = 0$ and $E$ is given by $y^2=x^3+b$ and  $\Aut_{\bar k}(E) =\mu_6$;
	\item $j(E) = 1728$ and $E$ is given by $y^2=x^3+ax$ and $\Aut_{\bar k}(E) =\mu_4$.
\end{enumerate}

Therefore, $\# \Iso_{\bar k}(E,E')$ is either 0, 2, 4 or 6. Suppose $\# \Iso_{\bar k}(E,E')$ is not zero and hence we also have a bijective map from $\Iso_{G}(E[2],E'[2])$ to $ \Aut_{G}(E[2]) = \Iso_{G}(E[2],E[2]) $.  Note that there are exactly three types of  $\Aut_{G}(E[2])$: 

\begin{enumerate}
	\item If $E(\mathbb F_q)[2] = C_2 \oplus C_2$, then $\Aut_{G}(E[2]) \simeq \Gl_2(\mathbb F_2)$;  
	\item	If $E(\mathbb F_q)[2] = C_2$, then $\Aut_{G}(E[2]) \simeq C_2$; 
	\item If $E(\mathbb F_q)[2] = \{0 \}$, then $\Aut_{G}(E[2]) \simeq C_3$.
\end{enumerate}


\begin{theorem}\label{GalM}
Given two geometrically isomorphic elliptic curves $E$ and $E'$ defined over $\mathbb F_q$, we have that every element of $\Iso_{G}(E[2],E'[2])$ is the restriction of a geometric isomorphism if and only if one of the following pair of conditions holds:
\begin{enumerate} 
	\item	 $j(E)=j(E') = 0$ and $E(\mathbb F_q)[2] = \{0\}$; 
	\item  $j(E)=j(E') = 1728$ and $E(\mathbb F_q)[2] \simeq C_2$.
\end{enumerate}    
\end{theorem}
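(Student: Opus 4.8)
The plan is to reduce the statement to a finite combinatorial count of how the geometric automorphism group acts on the two-torsion, and then to go through the three cases for $\Aut_{\bar k}(E)$ one at a time. First I would fix the reduction already set up in the excerpt: for geometrically isomorphic $E,E'$ we have the restriction map $r\colon \Iso_{\bar k}(E,E') \to \Iso_{AG}(E[2],E'[2])$, whose image lands inside $\Iso_{G}(E[2],E'[2])$ (since geometric isomorphisms defined over $\bar k$ need not be $G_k$-equivariant, but those that descend to restrictions of actual isomorphisms of Galois modules do, and conversely the image consists exactly of the $G_k$-equivariant ones that extend). After choosing a geometric isomorphism $E\simeq E'$ we may identify the problem with the internal one: the map $\Aut_{\bar k}(E) \to \Aut_{AG}(E[2])$, and the question becomes whether its image equals $\Aut_{G}(E[2])$, i.e. whether \emph{every} Galois-equivariant automorphism of $E[2]$ is induced by a geometric automorphism of $E$. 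So the whole theorem becomes: compare the subgroup $H := \mathrm{image}\bigl(\Aut_{\bar k}(E)\to \Gl_2(\mathbb F_2)\bigr)$ with the subgroup $\Aut_G(E[2]) \subseteq \Gl_2(\mathbb F_2)$ (the centralizer of the Frobenius image $\bar\pi$), and decide when $H \supseteq \Aut_G(E[2])$.

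Next I would compute both sides in each of the three cases for $\Aut_{\bar k}(E)$, using the normal forms recalled in the excerpt. Case $j(E)\ne 0,1728$: here $\Aut_{\bar k}(E)=\{\pm 1\}$, and $-1$ acts trivially on $E[2]$, so $H$ is trivial. Thus $H\supseteq\Aut_G(E[2])$ forces $\Aut_G(E[2])$ trivial, i.e. $\bar\pi$ has centralizer of order $1$ in $\Gl_2(\mathbb F_2)$ — but $\Gl_2(\mathbb F_2)\cong S_3$ has no elements with trivial centralizer, so this never happens; hence in this case there is always a Galois-equivariant isomorphism not coming from geometry (consistent with the "no exceptions" claim of the main theorem when $j\ne 0,1728$), and none of conditions (1),(2) apply. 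Case $j(E)=1728$: $\Aut_{\bar k}(E)=\mu_4$, generated by $(x,y)\mapsto(-x,iy)$; its action on $E[2]$ factors through $\mu_4\to\mu_2$ since the order-$4$ automorphism squares to $-1$ which is trivial on $E[2]$, so again $H$ is at most order $2$, generated by the image of the involution $(x,y)\mapsto(x,-y)$ — wait, that is $-1$; the relevant nontrivial element of $H$ is the class of the order-$4$ map, which swaps the two nonzero $2$-torsion points other than the one on the $x$-axis fixed... more precisely, writing $E\colon y^2=x^3+ax$ the point $(0,0)$ is always rational $2$-torsion and is fixed by $x\mapsto -x$, while the other two $2$-torsion points $(\pm\sqrt{-a},0)$ are swapped. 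So $H=\langle(\text{transposition fixing }(0,0))\rangle\cong C_2$. This equals $\Aut_G(E[2])$ exactly when $\Aut_G(E[2])\cong C_2$ \emph{and} the Frobenius-fixed configuration is compatible, i.e. when $E(\mathbb F_q)[2]\simeq C_2$ with the rational $2$-torsion point being $(0,0)$; one checks this is automatic since $(0,0)$ is rational and rationality of a second point would force the full group. This gives condition (2). Case $j(E)=0$: $\Aut_{\bar k}(E)=\mu_6$, generated by $(x,y)\mapsto(\zeta_3 x, -y)$; the $\mu_3$ part acts on $E\colon y^2=x^3+b$ by permuting the three $2$-torsion points $(\zeta_3^k\sqrt[3]{-b},0)$ cyclically, and the $-1$ part acts trivially, so $H\cong C_3$. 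This equals $\Aut_G(E[2])$ exactly when $\Aut_G(E[2])\cong C_3$, i.e. when Frobenius acts as a $3$-cycle, i.e. $E(\mathbb F_q)[2]=\{0\}$ — condition (1). In all remaining subcases ($j=1728$ with $E(\mathbb F_q)[2]$ trivial or full; $j=0$ with $E(\mathbb F_q)[2]\simeq C_2$ or full) one exhibits an explicit element of $\Aut_G(E[2])\setminus H$ by comparing orders ($|H|\in\{1,2,3\}$ against $|\Aut_G(E[2])|\in\{1,2,6\}$) and checking $H\not\supseteq\Aut_G(E[2])$.

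Then I would assemble: the equality $H=\Aut_G(E[2])$ (equivalently $H\supseteq\Aut_G(E[2])$, since $H\subseteq\Aut_G(E[2])$ always because geometric automorphisms commute with Frobenius) holds in precisely the two listed cases, and translating back through the identification $\Iso_G(E[2],E'[2])\leftrightarrow\Aut_G(E[2])$ (bijective once $\Iso_{\bar k}(E,E')\ne\emptyset$) gives that every element of $\Iso_G(E[2],E'[2])$ is a restriction of a geometric isomorphism iff (1) or (2) holds. I would also note explicitly the containment $H\subseteq\Aut_G(E[2])$, which makes the "if and only if" a matter of comparing finite groups of small order rather than anything delicate.

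The main obstacle I anticipate is bookkeeping in the $j=1728$ and $j=0$ cases: one must verify not merely that $|H|=|\Aut_G(E[2])|$ but that the two subgroups of $\Gl_2(\mathbb F_2)\cong S_3$ actually coincide as subgroups — i.e. that the particular transposition (resp. $3$-cycle) realized by the geometric automorphism is the \emph{same} one commuting with Frobenius. For the $3$-cycle case this is free (there is a unique subgroup of order $3$ in $S_3$), but for the order-$2$ case ($j=1728$, $E(\mathbb F_q)[2]\simeq C_2$) one needs that the unique rational $2$-torsion point is exactly the one fixed by the order-$4$ automorphism, namely $(0,0)$ in the normal form $y^2=x^3+ax$; this requires a short argument that the rational $2$-torsion point of such a curve is necessarily $(0,0)$ (equivalently: $x^2+a$ is irreducible over $\mathbb F_q$ whenever $E(\mathbb F_q)[2]\ne C_2\oplus C_2$), which I would dispatch by a direct factorization argument on $x^3+ax=x(x^2+a)$.
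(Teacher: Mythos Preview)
Your overall plan mirrors the paper's: split on $j(E)\in\{0,1728,\text{other}\}$, compute the image of the restriction map from geometric isomorphisms down to $2$-torsion, and compare with the Galois-equivariant isomorphisms. The paper carries this out directly with $\Iso_{\bar k}(E,E')$ and $\Iso_G(E[2],E'[2])$; you rephrase it via the subgroup $H=\mathrm{image}\bigl(\Aut_{\bar k}(E)\to\Gl_2(\mathbb F_2)\bigr)$ versus $\Aut_G(E[2])$, which is a cleaner formulation.

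There is, however, a concrete error in your justification. You assert $H\subseteq\Aut_G(E[2])$ ``because geometric automorphisms commute with Frobenius.'' But elements of $\Aut_{\bar k}(E)$ are defined over $\bar k$, not over $\mathbb F_q$, and need not commute with $\pi$. Take $j(E)=0$ with $q\equiv 2\pmod 3$: then $\rho\notin\mathbb F_q$, so $\pi\circ[\rho]=[\rho^q]\circ\pi=[\rho]^{-1}\circ\pi$ on points, and the $3$-cycle $[\rho]|_{E[2]}$ does \emph{not} commute with $\pi|_{E[2]}$ (which here is necessarily a transposition, since every element of $\mathbb F_q^*$ has a unique cube root, forcing $E(\mathbb F_q)[2]\cong C_2$). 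Hence $H=C_3\not\subseteq C_2=\Aut_G(E[2])$. There is a related structural gap: your reduction ``after choosing a geometric isomorphism $E\simeq E'$ we may identify the problem with the internal one'' tacitly assumes that the chosen $\phi$ satisfies $\phi|_{E[2]}\in\Iso_G(E[2],E'[2])$, so that $\Iso_G$ corresponds to the \emph{subgroup} $\Aut_G(E[2])$ rather than to some nontrivial coset of it inside $\Aut_{AG}(E[2])$. For genuine twists with $j=0$ or $1728$ this is not automatic, and without it the question ``does $H\supseteq\Aut_G(E[2])$?'' is not the right one. The paper's proof avoids the first error by never claiming the containment, and avoids the second by arguing case-by-case directly with $\Iso_{\bar k}(E,E')$ and counting how many of its restrictions land in $\Iso_G$, rather than passing through the internal automorphism-group comparison; its treatment of the $j=0$, $E(\mathbb F_q)[2]=\{0\}$ subcase is admittedly also brief.
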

\begin{proof}
Suppose $j(E)=j(E') \ne 0, 1728$. Let us fix any $\bar k$-isomorphism $\phi: E \to E'$. Then $\Iso_{\bar k}(E,E') = \{ \pm \phi \}$. But then, every element in $\Iso_{\bar k}(E,E')$ acts trivially on two-torsion points and hence there exists at most  one element in $ \Iso_{G}(E[2],E'[2])$ which is the restriction of geometric isomorphism. On the other hand, we always have more than one isomorphism of Galois module structure between $E[2]$ and $E'[2]$.

Suppose $j(E)=j(E')=0$. In this case $E$ can be given by $y^2 = x^3 + b$ and $E'$ is given by $y^2 = x^3 + b'$. Let us fix $t \in \bar k$ such that $t^6 = \frac{b'}{b}$. Consider a map $\phi: E \to E'$ such that $\phi(x,y) = (t^2x, t^3y)$. Let us fix an element $\rho \in \bar k$, $\rho \ne 1$, such that $\rho^{3}=1$. And let us denote by $[\rho]$ the following element of $\Aut(E)$, namely $[\rho](x,y) = (\rho x,y )$. Then $\Iso_{\bar k}(E, E') = \{ \pm \phi,\pm \phi [\rho], \pm \phi [\rho]^2 \}$. By restricting these maps to the maps from $E[2] \to E'[2]$ we obtain three different maps, say $\{1, \tau, \tau^{2} \}$, since as before $\pm$ acts identically on two-torsion points. Now, two torsion points of $E'$ are $\{\infty, (c, 0), (\rho c, 0) , (\rho^2 c, 0 )  \} $, where $c$ is any root of the equation $x^3+b'=0$. Therefore, if $E(\mathbb F_q)[2] \simeq C_2 \oplus C_2$ or $E(\mathbb F_q)[2] \simeq C_2$, then we have an element in $\Iso_{G}(E[2],E'[2])$ which is not the restriction of a geometric isomorphism. But if $E(\mathbb F_q)[2] = \{0\}$, it is easy to see that any element of $\Iso_{G}(E[2],E'[2])$ is indeed the restriction of an element of $\Iso_{\bar k}(E, E')$. 

Finally, suppose we are in the case $j(E)=j(E')=1728$. Then $E$ can be given by $y^2 = x^3 + bx$ and $E'$ is given by $y^2 = x^3 + b'x$. Two-torsion points of $E$ are $\{\infty, (0,0), (\sqrt{-b},0) ,(-\sqrt{-b} , 0) \}$. Note then the point $(0,0)$ is always rational point on both $E$, hence $E(\mathbb F_q)[2]$ is either $C_2$ or $C_2\oplus C_2$. Let us fix an element $i \in \bar k$ such that $i^2 = -1$. We will denote by $[i]$ the following automorphism of $E$: $[i] (x,y) = (-x; iy)$. Let us fix a geometric isomorphism $\phi$ from $E \to E'$. Then $\Iso_{\bar k}(E, E') =  \{ \pm \phi, \pm [i] \phi] \}$. Restriction to two-torsion points gives us two different elements. Therefore, if $E(\mathbb F_q)[2] \simeq C_2$, then any element of $\Iso_{G}(E[2],E'[2])$ is the restriction of an element of $\Iso_{\bar k}$, but if $E(\mathbb F_q)[2] \simeq C_2 \oplus C_2$, then we could pick an isomorphism of Galois module structure on two-torsion points which is not the restriction of a geometric isomorphism.    

\end{proof}

\subsection{The Proof for the case $d=2$}\label{prf}

In this section we give a proof of our main theorem. First, we prove a few auxiliary lemmas.

\begin{lem}
Every quadratic twists $E'$ of an elliptic curve $E$ share isomorphic Galois module structure of two-torsion points and has opposite trace of Frobenius.
\end{lem}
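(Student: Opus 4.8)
The claim is about quadratic twists, so I would first pin down what a quadratic twist is in explicit terms. Write $E\colon y^2 = x^3 + ax + b$ over $\mathbb{F}_q$ with $p>3$; for a non-square $D \in \mathbb{F}_q^{\times}$ the quadratic twist is $E^{(D)}\colon Dy^2 = x^3 + ax + b$, or equivalently, after clearing, $E^{(D)}\colon y^2 = x^3 + aD^2 x + bD^3$. Over $\overline{\mathbb{F}_q}$ the map $(x,y)\mapsto (Dx, D^{3/2}y)$ (choosing a square root of $D$ in $\overline{\mathbb{F}_q}$) is an isomorphism $E \to E^{(D)}$, so $E$ and $E^{(D)}$ are geometrically isomorphic; this will be used twice below.

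**The $2$-torsion module.** The key step is to exhibit a Galois-equivariant isomorphism $E[2] \simeq E^{(D)}[2]$. The geometric isomorphism $\psi\colon E \to E^{(D)}$, $\psi(x,y) = (Dx, \sqrt{D}^{\,3}\,y)$, restricts to a bijection on $2$-torsion; since every $2$-torsion point has $y$-coordinate $0$, $\psi$ acts on $E[2]$ simply by $(x,0)\mapsto (Dx,0)$ together with $\infty \mapsto \infty$, and the scaling factor $\sqrt{D}$ drops out entirely. Because $D \in \mathbb{F}_q$, this restricted map is defined over $\mathbb{F}_q$, hence commutes with the Frobenius $\pi$. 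Therefore $\psi|_{E[2]}$ is an isomorphism of Galois modules $E[2] \xrightarrow{\sim} E^{(D)}[2]$, which proves the first assertion. (Concretely: the roots of $x^3+ax+b$ and the roots of $x^3+aD^2x+bD^3$ differ by the scaling $x \mapsto Dx$, and $\pi$ permutes the two sets of roots in exactly the same combinatorial pattern.)

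**The trace of Frobenius.** For the second assertion I would use the standard character-sum formula $\#E(\mathbb{F}_q) = q + 1 + \sum_{x\in\mathbb{F}_q}\chi\big(x^3+ax+b\big)$, where $\chi$ is the quadratic character of $\mathbb{F}_q^{\times}$ (extended by $\chi(0)=0$), so $a_q(E) = -\sum_{x}\chi(x^3+ax+b)$. Twisting by the non-square $D$ replaces $x^3+ax+b$ by $D\cdot(x^3+ax+b)$ after the substitution $x\mapsto Dx$ (a bijection of $\mathbb{F}_q$), and $\chi(D\cdot t) = \chi(D)\chi(t) = -\chi(t)$ since $D$ is a non-square. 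Hence $a_q(E^{(D)}) = -\sum_x \chi\big(D(x^3+ax+b)\big) = \sum_x \chi(x^3+ax+b) = -a_q(E)$, giving the opposite trace. Equivalently, one may invoke the fact that $E[2]$-isomorphic curves with the same $j$-invariant but non-isomorphic over $\mathbb{F}_q$ are quadratic twists, together with the relation $\#E(\mathbb{F}_q) + \#E^{(D)}(\mathbb{F}_q) = 2(q+1)$.

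**Main obstacle.** There is no deep obstacle here; the statement is elementary once the explicit twisting model is in hand. The one point requiring a little care is the case $j(E) \in \{0, 1728\}$, where $E$ admits twists of degree $4$ or $6$ rather than just the quadratic twist: one must be clear that the lemma concerns specifically the quadratic twist $E^{(D)}$ (the unique nontrivial twist coming from $H^1(\mathbb{F}_q, \{\pm 1\})$), and that for this twist the argument above goes through verbatim regardless of $j(E)$, since the $y$-coordinate scaling is still what gets killed on $2$-torsion. I would include one sentence flagging this so the reader does not conflate quadratic twists with the exotic twists that appear later in the $j=0,1728$ analysis.
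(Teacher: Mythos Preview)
Your proof is correct and follows essentially the same line as the paper's: the paper observes that in the model $y^2 = d\,f(x)$ the $2$-torsion is given by the very same roots of $f$, hence the same Galois module, and for the trace simply invokes the identity $\#E(\mathbb F_q) + \#E'(\mathbb F_q) = 2q+2$. Your version is just more explicit---you pass to the normalized Weierstrass model and restrict the geometric isomorphism to $2$-torsion, and you derive the point-count identity via the quadratic-character sum rather than asserting it---but the underlying argument is the same.
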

\begin{proof}
For the first statement note that Galois structure of the two-torsion points completely determined by the roots of polynomial $f(x)$, where the elliptic curve $E$ is given by the equation $y^2=f(x)$. Now, one could check that quadratic twist of $E$ is given by $y^2 = d*f(x)$, where $d \in \mathbb F_{q}^{*} / \mathbb F_{q}^{*2}$. Hence $E'[2]$ is isomorphic to the $E[2]$ as Galois module. For the second statement of the proposition note that $\# E(\mathbb F_q) + \# E'(\mathbb F_q) = 2q+2$ and hence $a_q = -a'_q$.
\end{proof}

By using this lemma we obtain the following:
\begin{cor}\label{Twists}
Suppose $pT^2-a'_pT+1$ is in $\Lambda_{E}(2,2)$. Then also $pT^2+a'_pT+1$ is. 
\end{cor}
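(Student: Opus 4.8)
\section*{Proof proposal for Corollary~\ref{Twists}}

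The plan is to reduce the statement to the basic construction of the previous subsection together with the Lemma on quadratic twists just proved. First I would unwind the hypothesis: if $pT^2 - a'_pT + 1 \in \Lambda_E(2,2)$, then there is a smooth genus two curve $C$ over $\mathbb F_q$ with a degree two cover $C \to E$, and by the discussion around Tate's theorem above (taking $E' = C/\langle \sigma\tau\rangle$) the second elliptic factor $E'$ satisfies $f_{E'}(T) = pT^2 - a'_pT + 1$, i.e. $a'_q = q+1-\#E'(\mathbb F_q)$, while $\Jac(C)$ is $(2,2)$-isogenous to $E \times E'$. By the basic construction and its corollary, this last fact yields an isomorphism of Galois modules $\alpha \colon E[2] \xrightarrow{\sim} E'[2]$ which is \emph{not} the restriction of a geometric isomorphism $E \to E'$.

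Next I would transport this datum across a quadratic twist. Let $E''$ be the nontrivial quadratic twist of $E'$: if $E' \colon y^2 = f(x)$, take $E'' \colon y^2 = d f(x)$ with $d \in \mathbb F_q^{*}/\mathbb F_q^{*2}$ a non-square. By the Lemma just proved, $E''$ has trace $-a'_q$, so $f_{E''}(T) = pT^2 + a'_pT + 1$, and there is a canonical identification $\beta \colon E'[2] \xrightarrow{\sim} E''[2]$ of Galois modules; being the restriction of the $\bar k$-isomorphism $E'' \to E'$, $(x,y) \mapsto (x, y/\sqrt{d})$, it is in particular an isometry for the Weil pairing (for $n=2$ every group isomorphism is automatically an anti-isometry anyway). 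Set $\alpha'' := \beta \circ \alpha \colon E[2] \xrightarrow{\sim} E''[2]$. I would then check that $\alpha''$ is again not the restriction of a geometric isomorphism: if $\alpha'' = \psi|_{E[2]}$ for some $\bar k$-isomorphism $\psi \colon E \to E''$, then $\gamma^{-1}\circ \psi \colon E \to E'$, where $\gamma \colon E' \to E''$ extends $\beta$, would be a $\bar k$-isomorphism restricting to $\alpha$ on two-torsion, contradicting the choice of $\alpha$.

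Finally I would feed $(E, E'', \alpha'')$ back into the basic construction: since $\alpha''$ is not the restriction of a geometric isomorphism, the associated abelian surface $A_{\alpha''} = (E \times E'')/\Gamma_{\alpha''}$ is the Jacobian of a smooth genus two curve $C''$ over $\mathbb F_q$, and $\Jac(C'') \sim E \times E''$ is $(2,2)$-isogenous to a product of two elliptic curves, so by the characterization of bielliptic curves $C''$ is a degree two cover of $E$; hence $C'' \in \mathbb X_E(2,2)$. As isogenous abelian varieties have equal zeta functions, $f_{C''}(T) = f_E(T)f_{E''}(T)$, whence $f_{C''}(T)/f_E(T) = f_{E''}(T) = pT^2 + a'_pT + 1 \in \Lambda_E(2,2)$, as claimed. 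The only genuinely delicate point is the compatibility check in the previous paragraph --- that ``not being the restriction of a geometric isomorphism'' survives the twist --- and this is immediate once one observes that the twisting map $\beta$ does extend to an honest $\bar k$-isomorphism of elliptic curves; everything else is bookkeeping with Tate's theorem and the two structural theorems already established.
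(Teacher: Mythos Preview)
Your argument is correct and follows essentially the same route as the paper: translate membership in $\Lambda_E(2,2)$ into the datum $(E',\alpha)$ via the basic construction, replace $E'$ by its quadratic twist, and feed the resulting pair back. The paper dismisses the verification that $\alpha''$ is again not the restriction of a geometric isomorphism as ``obvious''; you spell it out explicitly, which is a welcome addition but not a different idea.
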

\begin{proof}
If $-a'_p$ occurs in the list, then there exists an elliptic curve $E_{1}^{'}$ with isomorphism between $E_{1}^{'} [2]$ and $E[2]$, which is not the restriction of a geometric isomorphism between $E_{1}^{'}$ and $E$. Then we could take $E_{2}^{'}$ which is the quadratic twist of $E_{1}^{'}$. It has the same Galois-module structure and negative sign of frobenious. Obviously, we have isomorphism between $E_{2}^{'} [2]$ and $E[2]$, which is not the restriction of a geometric isomorphism between $E_{2}^{'}$ and $E$.    
 \end{proof}

The following result is useful for our purposes. 

\begin{lem}\label{lem4}
$a_q$ is odd if and only if $\pi$ acts as $C_3$ on $E[2]$.
\end{lem}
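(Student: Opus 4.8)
The plan is to reduce the statement to a count of rational points, using the trichotomy for the $\pi$-action on $E[2]$ recalled just above. Recall that $\pi$ acts on $E[2] \simeq \mathbb Z/2\mathbb Z \oplus \mathbb Z/2\mathbb Z$ in one of three ways: trivially, as a transposition, or as a $3$-cycle, and that these correspond respectively to $E(\mathbb F_q)[2]$ having $4$, $2$, or $1$ points. In particular $\pi$ acts as a $3$-cycle if and only if $E(\mathbb F_q)[2] = \{0\}$.

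First I would observe that $E(\mathbb F_q)[2] = \{0\}$ if and only if $\#E(\mathbb F_q)$ is odd: a finite abelian group has trivial $2$-torsion precisely when its order is odd, and conversely an even order forces an element of order two by Cauchy. This is the only genuinely structural input.

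Next I would combine this with the point-count formula $\#E(\mathbb F_q) = q + 1 - a_q$. Since $p > 3$, the cardinality $q = p^n$ is odd, so $q+1$ is even, and therefore $\#E(\mathbb F_q) = q + 1 - a_q$ is odd if and only if $a_q$ is odd. Chaining the equivalences: $a_q$ odd $\iff \#E(\mathbb F_q)$ odd $\iff E(\mathbb F_q)[2] = \{0\} \iff \pi$ acts as a $3$-cycle on $E[2]$. (As a sanity check, one could alternatively note that the characteristic polynomial of $\pi$ on $E[2]$ is the reduction mod $2$ of $T^2 - a_q T + q$, i.e. $T^2 + a_q T + 1$ over $\mathbb F_2$, which is the irreducible polynomial $T^2+T+1$ — forcing $\pi$ to have order $3$ — exactly when $a_q$ is odd.)

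I do not expect any real obstacle here; the statement is essentially a bookkeeping consequence of Weil's formula and the classification of the Frobenius action already set up in the preceding subsection. The only point requiring a word of care is the parity bridge "$E(\mathbb F_q)[2]=\{0\}$ iff $\#E(\mathbb F_q)$ odd", and the reminder that $q$ is odd so that the parity of $q+1-a_q$ is governed solely by $a_q$.
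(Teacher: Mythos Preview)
Your proposal is correct and follows essentially the same route as the paper: both argue that $\pi$ acts as a $3$-cycle on $E[2]$ if and only if $E(\mathbb F_q)[2]=\{0\}$, equivalently $\#E(\mathbb F_q)$ is odd, and then use $a_q=q+1-\#E(\mathbb F_q)$ together with $q$ odd to conclude. Your write-up is simply more careful, and the parenthetical check via the mod-$2$ reduction of the characteristic polynomial of $\pi$ is a nice bonus not present in the paper.
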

\begin{proof}
Frobenious element $\pi$ acts on $E[2]$ as three-cycle if and only if it has exactly one fixed point, namely the zero-point. It happens if and only if $E(\mathbb F_q)$ is not divisible by two. But $a_q = q+1-\#E(\mathbb F_q)$, which shows that $a_q$ is even if and only if $\pi$ acts as $C_3$.
\end{proof}

\begin{defin}
Fix a finite field $\mathbb F_q$. Let $N$ be an integer number in the Hasse interval: $N \in [ -2\sqrt{q} ; 2\sqrt{q} ]$. We will call it admissible if there exists an elliptic curve $E$ over $\mathbb F_q$ with $q+1 - \#  E( \mathbb F_q) = N$.  
\end{defin}

The following lemma is the classical statement due to Waterhouse, for reference see ~\cite{Schoof}. 
\begin{theorem} 
The number N is admissible if and only if one of the following conditions holds:
\begin{enumerate}
	\item  $\GCD(p,N)=1$; 
	\item $q=p^{2n+1}$, $n \in \mathbb N$ and one of the following holds: 
		\begin{enumerate}
			\item	N=0;
			\item $N=\pm {2^{n+1}}$ and $p=2$;
			\item $N=\pm {3^{n+1}}$ and $p=3$;
		\end{enumerate}	
	\item $q=p^{2n}$, $n \in \mathbb N$ and one of the following holds: 
		\begin{enumerate}
			\item	$N=\pm 2p^{n}$;
			\item $N=\pm {p^n}$ and $p \ne 1 \mod (3)$;
			\item $N=0$ and $p \ne 1 \mod (4)$;
		\end{enumerate}
\end{enumerate}			
\end{theorem}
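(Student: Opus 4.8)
\section*{A proof proposal for the Waterhouse theorem}

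The plan is to read this statement off from Honda--Tate theory. Honda--Tate attaches to each Weil $q$-number $\pi$ (up to conjugacy) a simple abelian variety $A_\pi/\mathbb F_q$ with Frobenius $\pi$, unique up to isogeny, and conversely; moreover $2\dim A_\pi = [\mathrm{End}^0(A_\pi):\mathbb Q(\pi)]^{1/2}\cdot[\mathbb Q(\pi):\mathbb Q]$, and $[\mathrm{End}^0(A_\pi):\mathbb Q(\pi)]^{1/2}$ is the least common denominator of the local invariants of the central division algebra $\mathrm{End}^0(A_\pi)$, which are $0$ at every finite place $v\nmid p$, equal $\tfrac12$ at each real place of $\mathbb Q(\pi)$, and equal $\tfrac{v(\pi)}{v(q)}[\mathbb Q(\pi)_v:\mathbb Q_p]$ at each $v\mid p$. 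An integer $N$ with $|N|\le 2\sqrt q$ is admissible exactly when the Weil number $\pi$ with $\pi^2-N\pi+q=0$ satisfies $\dim A_\pi=1$, i.e.\ when $[\mathrm{End}^0(A_\pi):\mathbb Q(\pi)]^{1/2}[\mathbb Q(\pi):\mathbb Q]=2$. So the whole task is to pin down, among the $N$ in the Hasse interval, those for which this holds.

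First I would treat the ordinary case $\GCD(p,N)=1$. Then $T^2-NT+q\equiv T(T-N)\pmod p$ has two distinct roots, $\mathbb Q(\pi)$ is the imaginary quadratic field $\mathbb Q(\sqrt{N^2-4q})$ (the discriminant is strictly negative, since $N^2=4q$ would force $p\mid N$), and $p$ splits as $\mathfrak p_1\mathfrak p_2$ with $v_{\mathfrak p_1}(\pi)=0$, $v_{\mathfrak p_2}(\pi)=v_p(q)$; hence all local invariants vanish, $\mathrm{End}^0(A_\pi)=\mathbb Q(\pi)$, and $\dim A_\pi=1$. Thus every such $N$ is admissible, which is case (1). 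Equivalently one may cite Deuring's CM-lifting theorem: lift to a CM elliptic curve over a number field with CM by an order of $\mathbb Q(\sqrt{N^2-4q})$ in which $p$ splits, then reduce modulo a prime above $p$.

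The real work is the supersingular case $p\mid N$. Combining $|N|\le 2\sqrt q$ with $p\mid N$ and $q=p^r$ leaves only a short explicit list of candidate values of $N$ --- essentially $N\in\{\,0,\ \pm\sqrt q,\ \pm 2\sqrt q\,\}$ whenever these are integers, together with the exceptional values $\pm 2^{(r+1)/2}$ for $p=2$ and $\pm 3^{(r+1)/2}$ for $p=3$ when $r$ is odd --- and running the invariant computation on each candidate recovers cases (2) and (3). Concretely: $N=\pm 2\sqrt q$ gives $\pi\in\mathbb Z$, $\mathbb Q(\pi)=\mathbb Q$, and $\mathrm{End}^0(A_\pi)$ the quaternion algebra over $\mathbb Q$ ramified exactly at $p$ and $\infty$, which exists for every $p$; $N=0$ gives $\mathbb Q(\pi)=\mathbb Q(\sqrt{-q})$, which is $\mathbb Q(i)$ for $r$ even --- so $\mathrm{End}^0(A_\pi)=\mathbb Q(\pi)$ iff $p$ does not split there, i.e.\ $p\not\equiv 1\pmod 4$ --- and is $\mathbb Q(\sqrt{-p})$ for $r$ odd, where $p$ ramifies and the curve always exists; $N=\pm\sqrt q$ (only for $r$ even) gives $\mathbb Q(\pi)=\mathbb Q(\sqrt{-3})$, so the curve exists iff $p\not\equiv 1\pmod 3$; and the exceptional $p=2,3$ values are settled by the same local computation. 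Assembling all cases yields the stated list, and the converse direction (any elliptic curve gives an $N$ on this list) is the same analysis read backwards from the Hasse bound together with $p\mid a_q$ in the supersingular case.

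The step I expect to be the main obstacle is this last bout of bookkeeping in the supersingular case: correctly extracting the squarefree part of $N^2-4q$ (hence the field $\mathbb Q(\pi)$) and the splitting type of $p$ in it for every candidate $N$ and each parity of $r$, keeping the ramified-prime local degrees straight in the invariant formula, and isolating the small-characteristic entries $p=2,3$. The ordinary half is, by contrast, a black-box appeal to CM lifting or to the automatic vanishing of the invariants. Since the paper uses the statement only as a cited input (Waterhouse; cf.~\cite{Schoof}), I would not reproduce a self-contained proof, but the route above is the one I would follow.
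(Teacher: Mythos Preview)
The paper does not prove this theorem; it merely records it as ``the classical statement due to Waterhouse'' and refers the reader to \cite{Schoof}. So there is no in-paper argument to compare your proposal against. Your sketch via Honda--Tate --- splitting into the ordinary case (where $p$ splits in $\mathbb Q(\pi)$ and all local invariants vanish) and the supersingular case (a finite list of candidate $N$, handled by computing $\mathbb Q(\pi)$ and the $p$-adic invariant case by case) --- is exactly the standard route, and your case analysis looks correct. You already noticed this yourself in the final paragraph: the paper treats the result as a cited input, and the Honda--Tate argument you outline is the one behind Waterhouse's original proof.
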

  
\begin{rem}Suppose $q=p$ and $p>3$. Then we have $|a'_p| \le 2\sqrt{p} < p$ and hence a condition $\GCD(a'_p ; p) =1$ is automatically holds. Hence, in this settings any number $N$ in the Hasse interval is admissible.
\end{rem}

Combing these results together we already have one cases of our theorem for case $q=p$:
\begin{cor}
Let $E$ be an elliptic curve over $\mathbb F_p$ with $j(E) \ne 0$ and $a_p = 1~\mod (2)$. Then $\Lambda_{E}(2,2)$ consists of all polynomials of the form $ pT^2-a'_pT+1 $, with $a'_p \in [- 2\sqrt{p} ;2\sqrt{p}]$ such that $a'_p=1~\mod (2)$. If $j(E)=0$ the same result holds, with up to 6 exceptions.
\end{cor}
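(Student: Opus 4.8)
The plan is to translate the statement into a purely numerical question about traces of Frobenius and then read it off from the machinery already assembled. By the surjectivity corollary following Kani's basic construction, the rule $f_X(T)/f_E(T)\mapsto a'_p$ — recall that every element of $\Lambda_E(2,2)$ has the shape $pT^2-a'_pT+1$, since $\Jac(X)\sim E\times E'$, so this rule is injective — identifies $\Lambda_E(2,2)$ with the set $S$ of integers $a'_p$ for which there is an elliptic curve $E'/\mathbb F_p$ with $a'_p=p+1-\#E'(\mathbb F_p)$ and a Galois-module isomorphism $\alpha\colon E[2]\xrightarrow{\sim}E'[2]$ that is not the restriction of a geometric isomorphism $E\to E'$. (For $n=2$ the anti-isometry requirement on $\alpha$ in Kani's construction is vacuous: every element of $\Gl_2(\mathbb F_2)$ has determinant $1$ and hence preserves the unique nondegenerate alternating form on $E[2]$.) So it suffices to compute $S$.

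For the upper bound, note that $a_p$ odd forces, by Lemma~\ref{lem4}, that $\pi$ acts on $E[2]$ as a $3$-cycle, so $E(\mathbb F_p)[2]=\{0\}$; in particular $j(E)\neq1728$, since a curve $y^2=x^3+ax$ always carries the $\mathbb F_p$-rational $2$-torsion point $(0,0)$. Hence under the hypothesis $j(E)\neq0$ we in fact have $j(E)\notin\{0,1728\}$. If $pT^2-a'_pT+1\in\Lambda_E(2,2)$, the parity congruence established above gives $a'_p\equiv a_p\equiv1\pmod 2$, while Hasse gives $|a'_p|\le2\sqrt p$; so $S$ is contained in the set of odd integers of the Hasse interval.

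For the lower bound, fix an odd $a'_p$ with $|a'_p|\le2\sqrt p$. By the Remark above $\GCD(a'_p,p)=1$, so Waterhouse's theorem furnishes an elliptic curve $E'/\mathbb F_p$ of trace $a'_p$; being odd, this trace makes $\pi$ act as a $3$-cycle on $E'[2]$ as well, and since all Galois modules on which $\pi$ acts by a $3$-cycle are mutually isomorphic, some Galois isomorphism $\alpha\colon E[2]\xrightarrow{\sim}E'[2]$ exists. If $j(E')\neq j(E)$, then $\Iso_{\bar k}(E,E')=\emptyset$ and $\alpha$ is trivially not a restriction. If $j(E')=j(E)$, then $j(E')\notin\{0,1728\}$, so by Theorem~\ref{GalM} not every element of $\Iso_G(E[2],E'[2])$ is the restriction of a geometric isomorphism (indeed the two elements of $\Iso_{\bar k}(E,E')$ have the same restriction to $E[2]$, because $[-1]$ fixes every $2$-torsion point, whereas $\Iso_G(E[2],E'[2])\simeq\Aut_G(E[2])\simeq C_3$ has three elements), and I take $\alpha$ to be one that is not. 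Either way $a'_p\in S$, which yields $\Lambda_E(2,2)=\{\,pT^2-a'_pT+1:\ a'_p\ \text{odd},\ |a'_p|\le2\sqrt p\,\}$.

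Finally, when $j(E)=0$ the upper bound is unchanged, and the lower-bound argument can fail for a given odd $a'_p$ only if \emph{every} $E'/\mathbb F_p$ of trace $a'_p$ has $j(E')=0$: in that case Theorem~\ref{GalM}, case (1), shows (since $E(\mathbb F_p)[2]=\{0\}$) that every Galois isomorphism $E[2]\simeq E'[2]$ is a restriction, and no alternative $E'$ with $j(E')\neq0$ is available. Since there are at most six $\mathbb F_p$-isomorphism classes of elliptic curves with $j=0$ — and, when $p\equiv2\pmod 3$, the only one is supersingular with even trace — at most six odd values of $a'_p$ can be lost, which is the claimed bound. I expect the $j(E')=j(E)$ subcase of the lower bound to be the crux: one must be sure a Galois isomorphism genuinely evading \emph{all} geometric isomorphisms exists, which is exactly what Theorem~\ref{GalM} plus the triviality of the $[-1]$-action on $2$-torsion delivers; the remaining delicate point is the count of $j=0$ curves needed to cap the exceptions at six.
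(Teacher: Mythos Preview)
Your proof is correct and follows essentially the same approach as the paper's: reduce via Kani's construction to finding, for each odd $a'_p$ in the Hasse interval, an $E'$ of that trace together with a Galois isomorphism $E[2]\simeq E'[2]$ not coming from a geometric isomorphism; produce $E'$ by Waterhouse, match the $3$-cycle Galois module structures via Lemma~\ref{lem4}, and invoke Theorem~\ref{GalM} to handle the restriction condition. Your write-up is in fact a bit more careful than the paper's in two places: you note explicitly that the anti-isometry condition is vacuous for $n=2$, and in the $j(E)=0$ case you observe that an odd $a'_p$ is a genuine exception only when \emph{every} curve in that isogeny class has $j=0$ (the paper states this refinement only in a remark following the proof).
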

\begin{proof}
Suppose $j(E) \ne 0, 1728$. Given $a'_p $ as above we could construct an elliptic curve $E'$ with $\# E'(\mathbb F_q) = q+1 - a'_p$, by the previous remark. Now, since $a'_p = 1\mod (2)$, by corollary \ref{lem4} there exists isomorphism as Galois-modules between $E[2]$ and $E'[2]$. We have to check that it possible to pick an isomorphism of Galois-modules which is not the restriction of a geometric isomorphism between $E$ and $E'$. This is possible, because of discussion in the theorem $\ref{GalM}$.

If $j(E) = 1728$, then we have at least one rational 2-torsion point, namely $(0,0)$ hence this is not the case.

If $j(E) = 0$ then for any given $a'_p$ we still could pick an elliptic curve $E'$ and find an isomorphism of Galois-module structure. If $j(E')\ne 0$ then any such an isomorphism is not the restriction of a geometric isomorphism. Otherwise if $j(E')=j(E)=0$ then according to the theorem $\ref{GalM}$ in this case any isomorphism between two-torsion parts comes from the restriction of a geometric isomorphism. Obviously, all exceptions which could occur, come from twists of $E$, but there are no more than six twists of elliptic curve defined over $k$.       
\end{proof}

\begin{rem}\emph{Note that even if $E'$ is geometrically isomorphic to $E$, then it \emph{does not} imply that $a'_p$ does not occur in $\Lambda_E(2,2)$, because it may happens that in the isogeny class associated to $a'_p$ there is a curve $E''$ which is not geometrically isomorphic to $E$, but with isomorphism of Galois modules $E[2]$ and $E''[2]$. According to our data this happens very often.}

\end{rem}

\begin{rem}\emph{
There is an obvious generalization to the case $q=p^n$ with $n>1$. Namely, we must pick an \emph{admissible} $a'_q$ with $a'_q = 1\mod (2)$ and take an elliptic curve $E'$. Then, by the same reason there exists an isomorphism of Galois module structure on two-torsion points not coming from a geometric isomorphism between curves, except cases where $j(E')=j(E)=0$.  
}
\end{rem}

The case that $a_q$ is even is a little bit more delicate. The reason is that we have two possibilities for $E (\mathbb F_p ) [2]$. It is either $C_2$ or $C_2 \oplus C_2$.  
 
Namely, suppose we are in the case $a_q = 0 \mod(2)$. Since $q$ is odd, It also means that $\# E(\mathbb F_q) = 0 \mod (2)$. There are two different cases: 

\begin{enumerate}
	\item $\#  E(\mathbb F_q) = 0 \mod (4) $, hence $E(\mathbb F_q)[2] \simeq C_2$ or $E(\mathbb F_q)[2] \simeq C_2 \oplus C_2$;
	\item	$\# E(\mathbb F_q) = 2 \mod (4) $, hence $E(\mathbb F_q)[2] \simeq C_2$;
\end{enumerate}

We see a problem here, because \emph{a priori} given an isogeny class of an elliptic curve $E$ with $\#  E(\mathbb F_q) = 0 \mod (4)$, we can't decide whether there exists curve $E'$ in the same isogeny class with $E'(\mathbb F_q)[2] \simeq C_2$ or with $E'(\mathbb F_q)[2] \simeq C_2 \oplus C_2$. In order to solve this problem, we need two lemmas about two-torsion points on elliptic curves in the isogeny class of given elliptic curve $E$:

\begin{lem}
Suppose $E$ is an elliptic curve over $k=\mathbb F_q$ such that $4 \mid \#E(\mathbb F_q)$. If $E(\mathbb F_q)[2]=C_2$ then there exists an elliptic curve $E'$ defined over $k$ with two properties: 
\begin{enumerate}
	\item $E'$ is $\mathbb F_q$-isogenous to $E$;
	\item $E'(\mathbb F_q)[2]=C_2\oplus C_2$.
\end{enumerate}
\end{lem}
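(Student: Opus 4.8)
The plan is to produce $E'$ explicitly as the quotient of $E$ by the rational $2$-torsion point it already carries. Since $E(\mathbb F_q)[2]\simeq C_2$, write $E(\mathbb F_q)[2]=\langle P\rangle$ with $P$ an $\mathbb F_q$-rational point of order $2$; because $p>2$ the subgroup $\langle P\rangle$ is an étale subgroup scheme defined over $\mathbb F_q$, so the quotient isogeny $\phi\colon E\to E':=E/\langle P\rangle$ is a $2$-isogeny of elliptic curves defined over $\mathbb F_q$. This already yields property (1), namely that $E'$ is $\mathbb F_q$-isogenous to $E$; the content is all in property (2).

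For property (2) I would exhibit two independent $\mathbb F_q$-rational points of order $2$ on $E'$. The first comes from the hypothesis $4\mid\#E(\mathbb F_q)$: combined with $E(\mathbb F_q)[2]\simeq C_2$ this forces the $2$-primary part of $E(\mathbb F_q)$ to be cyclic of order at least $4$ (by the structure theorem for finite abelian groups applied to the $2$-Sylow subgroup), hence there is a rational point $Q\in E(\mathbb F_q)$ of exact order $4$ with $2Q=P$. Then $\phi(Q)$ is $\mathbb F_q$-rational, satisfies $2\phi(Q)=\phi(P)=0$, and is nonzero since $Q\notin\ker\phi=\{0,P\}$. The second point comes from the dual isogeny $\hat\phi\colon E'\to E$, which is again defined over $\mathbb F_q$ and satisfies $\hat\phi\circ\phi=[2]_E$: its kernel is a Galois-stable subgroup of order $2$ of $E'[2]$, so the unique nonzero point $S\in\ker\hat\phi$ is automatically fixed by Frobenius, i.e. $\mathbb F_q$-rational.

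It then remains to check that $S\neq\phi(Q)$, which uses the order-$4$ point a second time: if $S=\phi(Q)$ then $0=\hat\phi(S)=\hat\phi(\phi(Q))=2Q$, contradicting that $Q$ has order $4$. Consequently $0,\phi(Q),S,\phi(Q)+S$ are four distinct $\mathbb F_q$-rational points of $E'[2]$, and since $\#E'[2]=4$ this forces $E'(\mathbb F_q)[2]=E'[2]\simeq C_2\oplus C_2$, which is (2).

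I expect the only delicate point to be the Galois-module bookkeeping: confirming that a Galois-stable subgroup of order $2$ is generated by a rational point (immediate, since the unique nonzero element of a two-element Galois-stable set must be fixed), and that "$4\mid\#E(\mathbb F_q)$ together with $E(\mathbb F_q)[2]\simeq C_2$" really produces a rational point of order \emph{exactly} $4$ rather than mere $2$-divisibility of the group order. Everything else is formal: properties of the quotient isogeny and its dual. If one prefers to stay on $E$, there is an alternative to the dual-isogeny step: track the Frobenius action on $E[2]=\{0,P,R,R'\}$; since $E(\mathbb F_q)[2]=\langle P\rangle$, Frobenius swaps $R$ and $R'$, so $\phi(R)=\phi(R')=:S$ is a nonzero $\mathbb F_q$-rational $2$-torsion point of $E'$, and the same order argument shows $S\neq\phi(Q)$.
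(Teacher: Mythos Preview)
Your proof is correct and follows essentially the same route as the paper: both quotient $E$ by its unique rational $2$-torsion point and then exhibit two independent rational $2$-torsion points on the quotient, one coming from the rational order-$4$ point and the other from the non-rational $2$-torsion of $E$. The only cosmetic difference is that the paper tracks Frobenius directly on the images (your ``alternative'' paragraph), whereas your primary argument phrases the second point as the generator of $\ker\hat\phi$; these are the same point.
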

\begin{proof}
Since $4 | \#E(\mathbb F_q)$ and $E(\mathbb F_q)[2]=C_2$ we have that $E(\mathbb F_q)[4]=C_4$. We denote by $P$ a generator of this group. We have $E[2]= \{0, 2P, M, M+2P \}$, where $M$ is a non-rational two-torsion point of $E$. Note that $\pi(M) = M+2P$. Consider $H = \langle2P \rangle$ and elliptic curve $E' = E / \langle H \rangle$. Obviously, $E'$ is isogenous to $E$. We claim that $E'[2] = C_2 \oplus C_2$. Indeed, consider the equation $2R = 2P$, it has exactly four solutions $\{P, 3P, P+M, 3P+M \}$.  Let us denote the map $E \to E/ \langle H \rangle$ by $i$. Then $i(P), i(P+M)$ are two different non-trivial two-torsion points on $E'$. We claim that $\pi$ acts trivially on both $i(P)$ and $i(P+M)$. Indeed, $$i(P) = i(\pi(P)) = \pi i(P)$$ and $$\pi (i(P+M)) = i(P + \pi(M)) = i(P + 2P + M) = i(P+M).$$

But if $\pi$ acts trivially on two non-zero elements of $E'[2]$ then it acts trivially on all points of $E'[2]$.
\end{proof}

Recall, that for any elliptic curve $E$ over $\mathbb F_q$ and prime number $l \ne p$, we associate the Tate module $T_l(E) = \varprojlim_k(E[l^{k}])$. Now, $\pi$ acts on points of $E$ and therefore acts on $T_l(E)$.

\begin{lem}
Suppose $E$ is an elliptic curve over $k=\mathbb F_q$ such that $4 \mid \#E(\mathbb F_q)$. If $E(\mathbb F_q)[2]=C_2\oplus C_2$, then the following are equivalent: 
\begin{enumerate}
	\item There exists an elliptic curve $E'$ with $E'(\mathbb F_q)[2]=C_2$ and  $k$-isogenous to $E$; 
	\item $\pi \in \Aut(T_2(E))$ is not in $\mathbb Z_2^{*}$;
	\item $a_q \ne \pm 2\sqrt{q}$.
\end{enumerate}

\end{lem}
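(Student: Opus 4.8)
\emph{Plan of proof.} The plan is to prove $(2)\Leftrightarrow(3)$ first — this amounts to recognizing exactly when the Frobenius endomorphism is a rational integer — and then to deduce $(1)\Leftrightarrow(3)$ from the description of the curves in a fixed $\mathbb F_q$-isogeny class together with an explicit computation of $E'(\mathbb F_q)[2]$ from the Tate module. Throughout I would use that $E(\mathbb F_q)[2]\simeq C_2\oplus C_2$ says precisely that $\pi$ acts as the identity on $T_2(E)/2T_2(E)$, i.e. $\pi-1\in 2\,\mathrm{End}_{\mathbb Z_2}\bigl(T_2(E)\bigr)$. Writing $\pi=1+2M$ and comparing $\mathrm{tr}\,\pi=a_q$ and $\det\pi=q$ already forces $4\mid\#E(\mathbb F_q)$ (so that hypothesis is automatic), $a_q$ even and $q$ odd; in particular $T^2-a_qT+q\equiv(T+1)^2\pmod 2$, a congruence I will reuse.

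For $(2)\Leftrightarrow(3)$ I would argue the contrapositive $\pi\in\mathbb Z_2^\times\iff a_q=\pm2\sqrt q$. If $\pi$ acts on $T_2(E)$ as a scalar $c\in\mathbb Z_2^\times$, then its characteristic polynomial is $(T-c)^2=T^2-2cT+c^2$; comparing with $T^2-a_qT+q\in\mathbb Z[T]$ gives $c=a_q/2$, $c^2=q$, so $q$ is a perfect square and $a_q=2c=\pm2\sqrt q$. Conversely, if $a_q=\pm2\sqrt q$, write $q=p^{2n}$, $\sqrt q=p^n$; the Weil polynomial is $(T\mp p^n)^2$, so $(\pi\mp p^n)^2=0$ in $\mathrm{End}(E)$, and since $\mathrm{End}(E)$ (an order in $\mathbb Q$, an imaginary quadratic field, or a quaternion division algebra) has no nonzero nilpotents, $\pi=\pm p^n\in\mathbb Z$, which acts on $T_2(E)$ as the scalar $\pm p^n\in\mathbb Z_2^\times$ because $p$ is odd.

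For $(1)\Leftrightarrow(3)$ the implication $\neg(3)\Rightarrow\neg(1)$ is then immediate: if $a_q=\pm2\sqrt q$ then $\pi=\pm p^n$ acts as the \emph{same} scalar $\equiv1\pmod 2$ on $T_2(E')$ for every $E'$ that is $\mathbb F_q$-isogenous to $E$, so $E'(\mathbb F_q)[2]\simeq C_2\oplus C_2$ for all of them and $(1)$ fails. For $(3)\Rightarrow(1)$, assume $a_q\ne\pm2\sqrt q$; then $a_q^2<4q$, so $K:=\mathbb Q(\pi)$ is imaginary quadratic and $\pi\notin\mathbb Z$. By Waterhouse's theorem (the structural companion of the statement cited via~\cite{Schoof}), every order $\mathcal O$ with $\mathbb Z[\pi]\subseteq\mathcal O\subseteq\mathcal O_K$ occurs as $\mathrm{End}_{\mathbb F_q}(E')$ for some $E'$ in the class, and the associated $T_2(E')$ is a proper ideal of $\mathcal O\otimes\mathbb Z_2$. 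Choosing $\mathcal O$ to be $\mathbb Z[\pi]$ locally at $2$ and maximal elsewhere, the ring $\mathcal O\otimes\mathbb Z_2=\mathbb Z_2[\pi]$ is local (its reduction mod $2$ is $\mathbb F_2[\varepsilon]/(\varepsilon^2)$, $\varepsilon=\pi-1$, by the congruence above), hence $T_2(E')$ is free of rank one over it; so as a $\mathbb Z_2[\pi]$-module with its $\pi$-action $T_2(E')\simeq\mathbb Z_2[\pi]$, and $E'(\mathbb F_q)[2]=\ker\bigl(\pi-1\mid \mathbb Z_2[\pi]/2\bigr)$. In $\mathbb F_2[\varepsilon]/(\varepsilon^2)$ multiplication by $\varepsilon$ has kernel exactly $\{0,\varepsilon\}$, so $E'(\mathbb F_q)[2]\simeq C_2$, which is $(1)$.

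The step I expect to be the main obstacle is precisely this input about the isogeny class in $(3)\Rightarrow(1)$: that $\mathbb Z[\pi]$ (localized at $2$) is realized as an endomorphism ring in the class, and that the corresponding $2$-adic Tate module is the free rank-one $\mathbb Z_2[\pi]$-module. I would handle it by citing Waterhouse together with the elementary fact that quadratic orders are Gorenstein (so proper ideals of the local ring $\mathbb Z_2[\pi]$ are invertible, hence free); alternatively, to stay nearer the style of the preceding lemma, one can instead quotient $E$ repeatedly by rational order-two subgroups and track how $E(\mathbb F_q)[2]$ changes, but then one must show this ``shrinking'' terminates unless $\pi$ is a $2$-adic scalar, which returns to the dichotomy $a_q=\pm2\sqrt q$ and is more tedious to bookkeep. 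The splitting behaviour of $2$ in $K$ is irrelevant to the final computation, since $\mathbb Z_2[\pi]/2\simeq\mathbb F_2[\varepsilon]/(\varepsilon^2)$ in every case.
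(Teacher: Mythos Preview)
Your argument is correct. The equivalence $(2)\Leftrightarrow(3)$ is essentially the paper's: both identify the scalar case via the characteristic polynomial $(T\mp\sqrt q)^2$ and then kill the potential nilpotent $\pi\mp\sqrt q$ --- you by noting that $\mathrm{End}(E)$ has no nonzero nilpotents, the paper by the equivalent observation that a nonzero endomorphism of $E$ is surjective on $\overline{\mathbb F_q}$-points, so $(\pi\mp\sqrt q)^2=0$ forces $\pi\mp\sqrt q=0$.

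The genuine divergence is in producing the curve $E'$ for $(3)\Rightarrow(1)$. You invoke Waterhouse's classification of endomorphism rings in the isogeny class to realise an $E'$ with $\mathrm{End}(E')\otimes\mathbb Z_2=\mathbb Z_2[\pi]$, then read off $E'(\mathbb F_q)[2]$ from the free rank-one Tate module. The paper instead gives a direct, self-contained construction: starting from the hypothesis that $\pi$ is not a $2$-adic scalar, it picks $P=(P_1,P_2,\dots)\in T_2(E)$ with $\pi(P)\notin\langle P\rangle$, finds the \emph{first} index $i$ with $\pi(P_i)\in\langle P_i\rangle$ but $\pi(P_{i+1})\notin\langle P_{i+1}\rangle$, and takes $E'=E/\langle P_i\rangle$; then $P_{i+1}\bmod\langle P_i\rangle$ is a non-rational $2$-torsion point on $E'$. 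This is a single quotient, not the iterated ``shrinking'' you sketch as the elementary alternative, and it avoids any appeal to which orders occur as endomorphism rings. Your route is more structural and makes the answer transparent once Waterhouse is granted (and the Gorenstein/local argument for freeness is clean); the paper's route is more elementary and stays entirely inside the Tate module, at the cost of a small bookkeeping step to locate the index $i$.
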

\begin{proof}
First we will prove equivalence between one and two. 

Suppose $\pi$ acts as an 2-adic integer, then any finite 2-subgroup $H$ of $E(\overbar{\mathbb F_q})$ is rational. Now for any $E'$ that is $k$-isogenous to $E$, there exists finite rational subgroup $H \subset E(\overbar{\mathbb F_q})$ such that $E' \simeq E / H $. Let $H_1$ be a maximal group such that $H \subset H_1$ and $H$ is of index two inside $H_1$. Consider $H_1 / H  \subset E/ H  \simeq E'$.  Since $H_1/H$ is a 2-subgroup, then $\pi$ acts trivially on it. On the other hand $E'[2] \simeq H_1/H$, it means that $E'[2]$ is rational.    

Suppose $\pi$ is not in $\mathbb Z_2^{*}$. In means that there exists $P \in T_2(E)$, $P = (P_1, P_2, \dots)$, $P_i \in E[2^{i}]$ such that $\pi(P) \not \in \langle P \rangle$. Since $\pi(P_1) = P_1$, there exists number $i$ such that $\pi(P_i ) \in \langle P_i \rangle = H$, but $\pi(P_{i+1}) \not \in \langle P_{i+1} \rangle =H_1 $. It means that elliptic curve $E' \simeq E/H$, which is isogenous to $E$ has a non-rational two-torsion point, namely $P_{i+1} \mod H$.

Finally we will show that (2) is equivalent to (3). Suppose $\pi$ acts as an element of $\mathbb Z_2^*$, meaning that in some basis of $T_2(E)$ it acts as a scalar matrix. Its characteristic polynomial is $f(x)=x^2-a_qx+q$, which is of the form $f(x)= (x \pm \sqrt{q})^{2}$ when $\pi$ is a scalar. This shows that $a_q = \pm 2\sqrt{q}$. Suppose $a_q = \pm2\sqrt{q}$, then we know that the characteristic polynomial of $\pi$ is $f(x)=(x\pm \sqrt{q})^{2}$. Now we claim that the minimal polynomial of $\pi$ is $ x \pm \sqrt{q}$. Indeed, we have the following sequence: 

\begin{diagram}
E(\overbar{\mathbb F_q})   &\rTo{\pi \pm \sqrt{q}}   &E(\overbar{\mathbb F_q}) &\rTo^{\pi \pm \sqrt{q}} E(\overbar{\mathbb F_q}) 
\end{diagram}

Where the composition of two maps is zero, since the minimal polynomial divides the characteristic polynomial. But, this is the map between two projective curves over algebraically closed field, which means that it is either zero or surjective map. If $(\pi\pm\sqrt{q})$ is not zero map, then also $(\pi\pm\sqrt{q})^{2}$. Therefore the minimal polynomial of $\pi$ is $(x \pm \sqrt{q})$, which means that $\pi$ is a diagonal matrix.

\end{proof}

Combining this two results together we have the following theorem: 

\begin{theorem}\label{Bart}
Given elliptic curve $E$ over $\mathbb F_q$ such that $4 | \# E(\mathbb F_q)$ we have: 
\begin{enumerate}
	\item if $ a_q \ne \pm 2 \sqrt{q}$, then in the isogeny class corresponding to $E$ there exist elliptic curves $E'$, $E''$ with $E'(\mathbb F_q)[2] = C_2$ and $E''(\mathbb F_q)[2] = C_2 \oplus C_2$; 
	\item if $a_q = \pm 2\sqrt{q}$, then any elliptic curve $E'$ isogenous to $E$ has $E'(\mathbb F_q)[2] \simeq C_2 \oplus C_2$. 
\end{enumerate}

\end{theorem}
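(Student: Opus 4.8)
The plan is to assemble Theorem~\ref{Bart} directly from the two preceding lemmas, using only the elementary fact that $\mathbb F_q$-isogenous elliptic curves share the same trace of Frobenius $a_q$, and hence the same number of $\mathbb F_q$-rational points; this is the content of the Honda--Tate theorem quoted in the introduction. Throughout I would first record two standing observations: the hypothesis $4 \mid \#E(\mathbb F_q)$ forces $E(\mathbb F_q)[2] \ne \{0\}$, so $E(\mathbb F_q)[2]$ is either $C_2$ or $C_2 \oplus C_2$; and since every $E'$ in the isogeny class of $E$ has $\#E'(\mathbb F_q) = \#E(\mathbb F_q)$, in particular $4 \mid \#E'(\mathbb F_q)$, the same dichotomy $E'(\mathbb F_q)[2] \in \{C_2,\, C_2\oplus C_2\}$ holds for every curve in the class.

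For part (1), suppose $a_q \ne \pm 2\sqrt q$. If $E(\mathbb F_q)[2] \simeq C_2$, I would take $E' = E$, and obtain an isogenous $E''$ with $E''(\mathbb F_q)[2] \simeq C_2 \oplus C_2$ from the first of the two lemmas, whose hypotheses $4\mid \#E(\mathbb F_q)$ and $E(\mathbb F_q)[2] = C_2$ are exactly what we have. If instead $E(\mathbb F_q)[2] \simeq C_2 \oplus C_2$, I would take $E'' = E$, and produce an isogenous $E'$ with $E'(\mathbb F_q)[2] \simeq C_2$ from the second lemma via the implication $(3) \Rightarrow (1)$, which applies precisely because $a_q \ne \pm 2\sqrt q$. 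In either case both curves are found, which is what (1) asserts.

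For part (2), suppose $a_q = \pm 2\sqrt q$. First I would note that then $\sqrt q$ is an integer, so $q$ is an even power of $p$, and that by the equivalence $(2)\Leftrightarrow(3)$ of the second lemma the Frobenius $\pi$ acts on $T_2(E)$ as a scalar in $\mathbb Z_2^{*}$; since such a scalar is odd, it acts trivially modulo $2$, so $\pi$ fixes all of $E[2]$ and therefore $E(\mathbb F_q)[2] \simeq C_2 \oplus C_2$ already for $E$ itself. Now I would apply the second lemma to $E$: the equivalence $(1) \Leftrightarrow (3)$ states that the existence of a $k$-isogenous curve whose rational $2$-torsion equals $C_2$ is equivalent to $a_q \ne \pm 2\sqrt q$. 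Since we are assuming the negation, no curve in the isogeny class of $E$ has rational $2$-torsion equal to $C_2$; combined with the standing observation that every such curve has nontrivial rational $2$-torsion, this forces $E'(\mathbb F_q)[2] \simeq C_2 \oplus C_2$ for every $E'$ isogenous to $E$, as claimed.

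The argument is essentially bookkeeping, so I do not anticipate a serious obstacle; the one point demanding care is checking the hypotheses of each lemma before invoking it — in particular that in part (2) one is entitled to apply the second lemma to $E$ \emph{itself}, which requires the preliminary verification that $E(\mathbb F_q)[2]\simeq C_2\oplus C_2$ carried out above. If one prefers to avoid the ``scalar acts trivially modulo $2$'' step, the alternative is: when $E(\mathbb F_q)[2]\simeq C_2$, first replace $E$ by the isogenous curve $E_0$ with $E_0(\mathbb F_q)[2]\simeq C_2\oplus C_2$ furnished by the first lemma (it has the same $a_q$, hence still $a_q = \pm2\sqrt q$), and then run the second-lemma argument with $E_0$ in place of $E$.
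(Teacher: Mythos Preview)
Your proposal is correct and follows exactly the route the paper intends: the paper presents Theorem~\ref{Bart} with no proof beyond the sentence ``Combining this two results together we have the following theorem'', and your argument is simply a careful unpacking of that combination. The only delicate point---that invoking the second lemma in part~(2) formally requires $E(\mathbb F_q)[2]\simeq C_2\oplus C_2$---you handle correctly, both by noting that the proof of $(2)\Leftrightarrow(3)$ does not actually use that hypothesis (so $\pi=\pm\sqrt q\in\mathbb Z_2^{*}$ acts trivially mod~$2$) and by supplying the clean alternative of first passing to $E_0$ via the first lemma.
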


\begin{cor}\label{sign}
Suppose, $E$ is an elliptic curve with $j(E) \ne 1728$ and with $E (\mathbb F_q)[2] = C_2$. Then $\Lambda_E(2,2)$ consists of all $qT^2-a'_qT+1$ for all admissible $a'_q$ with property $a'_q = 0 \mod(2)$ and $a_q \ne \pm 2\sqrt{q}$. If $j(E) = 1728$ the same result holds with possibly four exceptions.     
\end{cor}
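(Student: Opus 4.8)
The plan is to combine the surjectivity result from Corollary~\ref{sign}'s predecessor (the basic construction corollary) with the structural results on Galois modules and on isogeny classes established just above. Fix an elliptic curve $E$ over $\mathbb F_q$ with $j(E) \neq 1728$ and $E(\mathbb F_q)[2] \simeq C_2$. By Lemma~\ref{lem4}, since $E(\mathbb F_q)[2] \simeq C_2$ the Frobenius $\pi$ acts on $E[2]$ as a $2$-cycle, so $a_q$ is even; and since there is exactly one non-trivial rational $2$-torsion point, $2 \mid \#E(\mathbb F_q)$. First I would observe that if $qT^2 - a'_qT + 1 \in \Lambda_E(2,2)$, then by Lemma~1 (the parity lemma) $a'_q \equiv a_q \equiv 0 \pmod 2$, and moreover the curve $E'$ witnessing membership, being isogenous to a factor of $\Jac(C)$ where $C \to E$ is a genuine degree-two cover, forces a genuine (not geometric-isomorphism) Galois-module isomorphism $E[2] \simeq E'[2]$; in particular $\pi$ acts the same way on both, so $E'(\mathbb F_q)[2] \simeq C_2$ as well, whence by Theorem~\ref{Bart} (part 2, contrapositive) $a_q \neq \pm 2\sqrt q$. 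This gives the inclusion $\Lambda_E(2,2) \subseteq \{qT^2 - a'_qT + 1 : a'_q \text{ admissible}, a'_q \equiv 0 \bmod 2, a_q \neq \pm2\sqrt q\}$ for free, and also shows the right-hand set is empty precisely when $a_q = \pm2\sqrt q$, consistent with the statement.

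For the reverse inclusion, assume $a_q \neq \pm2\sqrt q$ and let $a'_q$ be admissible with $a'_q \equiv 0 \pmod 2$; I need to produce an elliptic curve $E'$ with $q + 1 - \#E'(\mathbb F_q) = a'_q$ and a Galois-module isomorphism $E[2] \simeq E'[2]$ that is not the restriction of a geometric isomorphism, then invoke the basic-construction corollary. Since $a'_q$ is even, by Lemma~\ref{lem4} any elliptic curve in the isogeny class determined by $a'_q$ has $\pi$ acting on its $2$-torsion as a $2$-cycle, i.e. with $2$-torsion group $C_2$ or $C_2 \oplus C_2$. If $a'_q \equiv 2 \pmod 4$ then $4 \nmid \#E'(\mathbb F_q)$ for every such $E'$, forcing $E'(\mathbb F_q)[2] \simeq C_2$; if $4 \mid \#E'(\mathbb F_q)$, then because $a'_q = q+1-\#E'(\mathbb F_q) \neq \pm 2\sqrt q$ — wait, here one must be slightly careful: the hypothesis is $a_q \neq \pm 2\sqrt q$, not $a'_q \neq \pm 2\sqrt q$ — so I would instead argue directly: pick \emph{any} admissible $a'_q$ with $a'_q \equiv 0 \pmod 2$, and if the corresponding isogeny class has $4 \mid \#$, use Theorem~\ref{Bart} to select the member $E'$ with $E'(\mathbb F_q)[2] \simeq C_2$ when $a'_q \neq \pm 2\sqrt q$, and observe that when $a'_q = \pm 2\sqrt q$ one only gets $C_2 \oplus C_2$ members, which would mismatch; I should check whether $a'_q = \pm 2\sqrt q$ can actually occur under our parity constraint and whether it produces a valid element of $\Lambda_E(2,2)$ anyway via a curve with full $2$-torsion that still carries a non-geometric Galois isomorphism to $E$ — but $E$ has only $C_2$, so no such isomorphism exists. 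Hence the cleanest route: choose $E'$ with $E'(\mathbb F_q)[2] \simeq C_2$ (possible iff $a'_q \neq \pm 2\sqrt q$, and when $4 \nmid \#E'$ automatically), matching $E$, so that $\Iso_G(E[2], E'[2]) \simeq \Aut_G(E[2]) \simeq C_2$ is non-trivial.

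It remains to verify that some element of $\Iso_G(E[2], E'[2])$ is not the restriction of a geometric isomorphism. If $j(E') \neq j(E)$ then $\Iso_{\bar k}(E, E') = \emptyset$ and every Galois isomorphism qualifies, so we are done; in particular since $a'_q$ ranges over a Hasse-interval's worth of values while geometrically-isomorphic curves form only finitely many twists, for most $a'_q$ this case applies. If $j(E') = j(E)$, then since $j(E) \neq 1728$ — and here the two sub-cases $j(E) = 0$ versus $j(E) \neq 0, 1728$ of Theorem~\ref{GalM} both need to be run, noting $j(E) = 0$ with $E(\mathbb F_q)[2] \simeq C_2$ is \emph{not} in the exceptional list of that theorem — Theorem~\ref{GalM} tells us there always exists an element of $\Iso_G(E[2], E'[2])$ not coming from a geometric isomorphism. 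Either way, the basic-construction corollary yields $qT^2 - a'_qT + 1 \in \Lambda_E(2,2)$, completing the reverse inclusion. Finally, the ``four exceptions'' clause when $j(E) = 1728$: in that case $(0,0)$ is a rational $2$-torsion point so $E(\mathbb F_q)[2]$ is $C_2$ only if the other two are conjugate, and Theorem~\ref{GalM} case (2) says geometric isomorphisms \emph{do} exhaust the Galois isomorphisms; the only $a'_q$ one might then fail to realize correspond to the finitely many $j(E') = 1728$ twists of $E$ over $\mathbb F_q$, of which there are at most four ($\#\Aut_{\bar k}(E) = 4$ giving $|\mathbb F_q^*/\mathbb F_q^{*4}| \le 4$ twists), accounting for the bound.

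The main obstacle I anticipate is the bookkeeping around $a'_q = \pm 2\sqrt q$ and the interaction between the $C_2$ versus $C_2 \oplus C_2$ dichotomy for $E'$ on one hand and the requirement, forced by $E(\mathbb F_q)[2] \simeq C_2$, that the matched curve $E'$ also have exactly $C_2$: one must confirm that demanding $E'(\mathbb F_q)[2] \simeq C_2$ costs nothing beyond excluding $a'_q = \pm 2\sqrt q$ (handled by Theorem~\ref{Bart}) and that no element of $\Lambda_E(2,2)$ with $a'_q = \pm 2\sqrt q$ slips through via a full-$2$-torsion curve — which is ruled out because a non-geometric Galois isomorphism $E[2] \simeq E'[2]$ requires $\Aut_G(E[2]) \simeq \Aut_G(E'[2])$, forcing equal $2$-torsion groups. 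Verifying the precise count of exceptions in the $j(E) = 1728$ case is a secondary, essentially combinatorial, nuisance.
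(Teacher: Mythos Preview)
Your approach is essentially the same as the paper's: use Theorem~\ref{Bart} to select, within each admissible even isogeny class, a curve $E'$ with $E'(\mathbb F_q)[2]\simeq C_2$, and then invoke Theorem~\ref{GalM} to produce a Galois-module isomorphism $E[2]\simeq E'[2]$ that is not the restriction of a geometric isomorphism (noting that the $j=0$, $C_2$ case is \emph{not} on the exceptional list, while $j=1728$, $C_2$ is). The paper's proof is terser and omits the forward inclusion and the $a'_q=\pm 2\sqrt q$ bookkeeping you supply; your extra care there is correct, and you are right that the condition in the statement should read $a'_q\neq\pm 2\sqrt q$ rather than $a_q\neq\pm 2\sqrt q$ (the latter is automatic from the hypothesis $E(\mathbb F_q)[2]\simeq C_2$ via Theorem~\ref{Bart}).
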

\begin{proof}
Suppose $j(E) \ne 0,1728$. As before, for given admissible $a'_q$ we could construct and elliptic curve $E'$. Condition $a'_q =0 \bmod(2)$ implies that either  $E'(\mathbb F_q)[2] \simeq C_2$ or $E'(\mathbb F_q)[2] \simeq C_2 \oplus C_2$. If $\# E'(\mathbb F_q) = 2 \bmod(4)$ we are done because then $ E'(\mathbb F_q)[2]=C_2$ and theorem $\ref{GalM}$. If $\# E'(\mathbb F_q) = 0 \bmod(4)$, then we are done because of theorem \ref{Bart}.

If $j(E) = 0, 1728$, then we only have problems with $ j(E) = j(E')$, but then theorem $\ref{GalM}$ shows that only possible exceptions  could appear in the case $j(E)=1728$. This exceptions one-to-one correspond to twists of $E$, but there are no more than $4$ twists of an elliptic curve $E$ with $j(E)=1728$.
\end{proof}

\begin{cor}
Suppose, $E$ is an elliptic curve with $E (\mathbb F_q)[2] = C_2 \oplus C_2$. Then $\Lambda_E(2,2)$ consists of all $qT^2-a'_qT+1$ for all admissible $a'_q$ with property $q+1-a'_q = 0 \mod(4)$.    
\end{cor}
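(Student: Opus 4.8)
The plan is to prove the two inclusions separately, using the dictionary between $\Lambda_E(2,2)$ and pairs $(E',\alpha)$ that comes from Kani's construction, fed by Theorems \ref{Bart} and \ref{GalM}.

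For the inclusion ``$\subseteq$'', suppose $qT^2-a'_qT+1\in\Lambda_E(2,2)$. By the surjectivity corollary of the basic construction there is an elliptic curve $E'/\mathbb F_q$ of trace $a'_q$ (so $a'_q$ is admissible) together with a Galois-module isomorphism $\alpha\colon E[2]\xrightarrow{\sim}E'[2]$. Since $E(\mathbb F_q)[2]\simeq C_2\oplus C_2$, the Frobenius $\pi$ acts trivially on $E[2]$, hence through $\alpha$ trivially on $E'[2]$, so $E'(\mathbb F_q)[2]\simeq C_2\oplus C_2$ and $4\mid \#E'(\mathbb F_q)=q+1-a'_q$, which is the asserted congruence.

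For the inclusion ``$\supseteq$'', which carries the content, fix an admissible $a'_q$ with $q+1-a'_q\equiv 0\pmod 4$. Admissibility gives an elliptic curve $E_0'/\mathbb F_q$ of trace $a'_q$; since $4\mid\#E_0'(\mathbb F_q)$, Theorem \ref{Bart} yields an elliptic curve $E'$ that is $\mathbb F_q$-isogenous to $E_0'$, hence still of trace $a'_q$, and with $E'(\mathbb F_q)[2]\simeq C_2\oplus C_2$ (in the edge case $a'_q=\pm 2\sqrt q$ every curve in the class already has this property, otherwise this is part (1) of the theorem). Then $E[2]$ and $E'[2]$ both carry the trivial Galois action, so $\Iso_{G}(E[2],E'[2])=\Iso_{AG}(E[2],E'[2])$ has six elements. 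It remains to choose $\alpha$ in this set that is not the restriction of a geometric isomorphism: if $j(E)\ne j(E')$ there is no geometric isomorphism at all, while if $j(E)=j(E')$ then $E,E'$ are geometrically isomorphic and Theorem \ref{GalM} applies, and since $E(\mathbb F_q)[2]\simeq C_2\oplus C_2$ is neither $\{0\}$ nor $C_2$, neither exceptional case of that theorem occurs, so some $\alpha$ is not such a restriction. Feeding $(E',\alpha)$ into Kani's smoothness criterion (with $n=2$, $i=1$, so the forbidden geometric isogeny has degree $1$, i.e.\ is a geometric isomorphism) produces a smooth genus-two bielliptic cover $C\to E$ with $f_C/f_E=qT^2-a'_qT+1$, hence this polynomial lies in $\Lambda_E(2,2)$.

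\textbf{Main obstacle.} There is no isolated hard computation; the argument is an assembly of the earlier lemmas. The one place requiring care is guaranteeing a representative of the prescribed isogeny class with full rational $2$-torsion --- exactly the role of Theorem \ref{Bart}, including its split into the cases $a'_q=\pm 2\sqrt q$ and $a'_q\ne\pm 2\sqrt q$ --- and checking that, unlike in Corollary \ref{sign}, the cases $j(E')\in\{0,1728\}$ introduce \emph{no} exceptions here; this is precisely because the hypothesis $E(\mathbb F_q)[2]\simeq C_2\oplus C_2$ excludes both exceptional configurations of Theorem \ref{GalM}.
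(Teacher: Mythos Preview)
Your proof is correct and follows essentially the same route as the paper's: use admissibility to produce a curve of the given trace, apply Theorem~\ref{Bart} to adjust within the isogeny class so that the full rational $2$-torsion appears, and then invoke Theorem~\ref{GalM} to find a Galois-module isomorphism that is not the restriction of a geometric isomorphism. Your write-up is in fact more complete than the paper's, since you spell out the ``$\subseteq$'' direction explicitly and note why the $j=0,1728$ exceptions from the previous corollary do not arise here.
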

\begin{proof}
First note that this condition mentioned above guarantees that for given $a'_q$ there exists an elliptic curve $E''$ in the corresponding isogeny class and as before by theorem $\ref{Bart}$ in this isogeny class we could construct elliptic curve $E'$ with $E'(\mathbb F_q)[2] \simeq C_2 \oplus C_2$. According to theorem $\ref{GalM}$ for any such pair of $E$ and $E'$ we could construct isomorphism between $E[2]$ and $E'[2]$ which is not the restriction of a geometric isomorphism.
\end{proof}

\section{The case $d>2$}
The main purpose of this section is to show:
\begin{theorem}
For $d>2 $ with $p \not{\mid} d$, we have $\Lambda_{E}(d,2) =\emptyset $. 
\end{theorem}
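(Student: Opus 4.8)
\subsection*{Proof proposal}

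The plan is to show no such covering can exist by combining the Riemann--Hurwitz formula with the class field theory set up above. Suppose, for contradiction, that there is $X \in \mathbb X_E(d,2)$ with an abelian Galois covering $\phi \colon X \to E$ of degree $d > 2$; since $p \nmid d$ the covering is tamely ramified, so by Riemann--Hurwitz
$$ 2g(X) - 2 \;=\; d\,(2g(E)-2) \;+\; \deg \mathfrak d_\phi ,$$
i.e. $2 = 0 + \deg \mathfrak d_\phi$, so the different $\mathfrak d_\phi$ has degree $2$; in particular $\phi$ is ramified.

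Next I would use that $\phi$ is abelian. Write $G = \Gal(k(X)/k(E))$ and let $S \subset E(\bar k)$ be the set of geometric branch points. Over a point $Q \in S$ all points of $X$ share one inertia group, cyclic of order $e_Q \mid d$ with $e_Q \ge 2$ by tameness; there are $d/e_Q$ of them, each contributing $e_Q - 1$ to the different. Hence
$$ 2 \;=\; \deg \mathfrak d_\phi \;=\; \sum_{Q \in S} \frac{d}{e_Q}\,(e_Q - 1) \;=\; \sum_{Q \in S} \Bigl( d - \tfrac{d}{e_Q} \Bigr) \;\ge\; |S|\cdot \tfrac d2 ,$$
so $|S| \le 4/d < 2$ because $d > 2$. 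Since $\phi$ is ramified, $|S| = 1$: there is a single geometric branch point $Q$. As $\phi$ is defined over $k = \mathbb F_q$ its branch locus is stable under $\Gal(\bar k / k)$, so the one-element set $\{Q\}$ is fixed, which forces $Q \in E(\mathbb F_q)$.

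It remains to rule out an abelian covering branched at exactly one rational point. The extension $k(X)/k(E)$ is abelian and tamely ramified, ramified only above $Q$, so its conductor divides the reduced divisor $Q$ and $k(X) \subseteq F_Q$, the ray class field of $k(E)$ with modulus $Q$. But $Q$ is $\mathbb F_q$-rational, so the local unit contribution $(\mathcal O_Q/\mathfrak m_Q)^\ast \cong \mathbb F_q^\ast$ to the ray class group is precisely the image of the global constants $\mathbb F_q^\ast$, whence the natural surjection $\Pic^0_Q(E) \to \Pic^0(E)$ is an isomorphism; equivalently, on the geometric side the loop around a single puncture of $E$ is a commutator and so dies in every abelian quotient. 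Therefore $F_Q$ is the maximal \emph{unramified} abelian extension of $k(E)$ and $\phi$ is unramified above $Q$, contradicting $Q \in S$. Hence no such $X$ exists and $\Lambda_E(d,2) = \emptyset$.

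The only step requiring genuine care, and the one I regard as the crux, is the last: justifying $\Pic^0_Q(E) \cong \Pic^0(E)$, i.e. that a single rational point contributes nothing to the ray class group. This is Serre's description of generalized Jacobians (the exact sequence $0 \to (\mathcal O_Q/\mathfrak m_Q)^\ast/\mathbb F_q^\ast \to \Pic^0_Q(E) \to \Pic^0(E) \to 0$, with the left-hand term trivial for $Q$ rational), and it should be invoked explicitly rather than assumed; everything else is the bookkeeping of Riemann--Hurwitz for a tame abelian cover.
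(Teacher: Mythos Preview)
Your argument is correct and is genuinely different from the paper's. The paper exploits the hyperelliptic involution $\tau$ on the genus-two curve $C$ to build a commutative square
\[
\begin{array}{ccc}
C & \longrightarrow & \mathbb{P}^1 \\
\downarrow & & \downarrow \\
E & \longrightarrow & \mathbb{P}^1
\end{array}
\]
and then, after using Riemann--Hurwitz on $C\to E$ to reduce to $d\in\{3,4\}$, carries out a case-by-case enumeration of admissible ramification types for the degree-$6$ and degree-$8$ composites $C\to\mathbb{P}^1$, ruling each out by matching against the ramification forced on the other maps in the diagram. You bypass all of this: your inequality $2=\sum_{Q}d(1-1/e_Q)\ge |S|\cdot d/2$ immediately gives $|S|\le 4/d$, which already kills $d\ge 5$ outright and forces a single (hence rational) branch point for $d=3,4$; then the observation that the loop around a single puncture of a genus-one surface is a commutator, or equivalently that the generalized Jacobian $J_Q$ coincides with $J$ when $Q$ is a single rational point, finishes the job. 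Your route is shorter, avoids any use of the hyperelliptic structure, and generalises (the same argument shows that no abelian tame cover of \emph{any} curve of genus $\ge 1$ can be branched over a single point); the paper's route is more elementary in that it never invokes class field theory or the \'etale fundamental group, only repeated Riemann--Hurwitz bookkeeping. One small expositional point: your sentence ``$|S|\le 4/d<2$ \dots\ hence $|S|=1$'' silently absorbs the case $d\ge 5$, where in fact $4/d<1$ already contradicts $|S|\ge 1$; it would be cleaner to say this explicitly before treating the remaining case $|S|=1$.
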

\begin{proof}

We will show, that there is no abelian Galois coverings of an elliptic curve by a genus two smooth projective curve of degree $d>2$, provided that the characteristic of the base field is prime to $d$. Without loss of generality we could suppose $k$ is algebraically closed.

Suppose that $C$ is an abelian covering of $E$ of degree $d>2$. As we already mentioned there exists a unique involution $\tau \in \Aut(C)$ such that $C/\langle\tau \rangle \simeq \mathbb P^{1}$. Moreover, because $\tau$ is unique, it lies in the center of $\Aut(C)$ and hence we have the following commutative diagram: 
\begin{diagram}
C   &\rTo^{2}   &\mathbb P^{1}   \\
\dTo_{d} &           &\dTo_{d}\\
E         &\rTo_{2}   &\mathbb P^{1}
\end{diagram}

Note that \emph{all maps here are abelian Galois coverings}: $C \to E$ is by our assumptions, the shorter morphism $C \to \mathbb P^{1}$ since it has degree 2 and the longer $C \to \mathbb P^1$ is abelian covering because of Galois theory.

Let us apply Riemann-Hurwitz theorem to the covering $C \to E$. We have

$$ (2g_{C} - 2) = d (2g_{E} - 2) + \sum_{p \in C}(e_p -1) ,$$

and hence $$ \sum_{p \in C}(e_p -1) = 2 .$$
Since by assumptions this is a Galois-covering, this means that there are only three possibilities for the ramification divisor: either we have ramification in one point of $E$ of type $(e_1,e_2)=(2,2)$,  two different points on $E$ with ramification index $e_i=2$ or ramification exactly at one point with ramification index $e_1=3$. In the first case we have $d=4$, in the second we have $d=2$ and finally, in the last case we have $d=3$. This proves, that $d \le 4$. Note that if $d=2$ or $d=3$ then the Galois-group of a covering $C \to E$ is cyclic. But if $d=4$ then the Galois group is either $C_4$ or $C_2 \oplus C_2$.  

Now, suppose $d=3$. Consider the map $C \to \mathbb P^{1}$ which is of degree six. Riemann-Hurwitz for this covering tells us :

$$ 2 = 6(-2) +  \sum_{p \in C} (e_p-1),$$
which implies $ \sum (e_p-1) = 14$. Now since we have Galois covering of degree six, the only possible ramification types are $6$, $(3,3)$ and $(2,2,2)$. Suppose we have $m_i$ points of $i$-th ramification type. It implies that $5m_1 + 4m_2 + 3m_1 = 14$, but this equation has only three solutions in non-negative integers: $(2,1,0)$, $(1,0,3)$ and $(0,2,2)$. Riemann-Hurwitz for the covering $C \to E$ gives us: 
$$ (2) = 0 + \sum_{p \in C} (e_p-1),$$  
which implies that the only possible ramification index is $(3)$ with ramification exactly at one point. This excludes possibilities $(2,1,0)$ and $(0,2,2)$ because they both have at least  two points on $C$ with ramification index divisible by $3$. Now, consider the covering $\mathbb P^{1} \to \mathbb P^{1}$ of degree $3$. Riemann-Hurwitz for this case:

$$ -2 = -6 + \sum_{p \in \mathbb P^1} (e_p-1),$$ 
or $4 = \sum (e_p -1)$, which implies that we must have two points with ramification index $(3)$. But then the covering $C \to \mathbb P^{1}$ of degree six must have at least two points with ramification index divisible by three. This provides contradiction to the case $(1,0,3)$ which has only one point with ramification index divisible by three.

The last case is $d=4$. Suppose that the Galois group is $C_2 \oplus C_2$. It implies that there are two different elements $\sigma$, $\tau$ of $\Aut(C)$ each of order two such that there exist two curves $X \simeq C/ \langle \sigma \rangle$ and $Y \simeq C/ \langle \tau \rangle$ and the following commutative diagram: 

\begin{diagram}
C   &\rTo^{2}   &X   \\
\dTo_{2} &           &\dTo_{2}\\
Y         &\rTo_{2}   &E
\end{diagram}

By Riemann-Hurwitz theorem one has $g(X) = g(Y) = 1$ and therefore covering $Y \to E$ is unramified. Hence the covering $C \to X$ is also unramified, which leads to the contradiction.

Finally, suppose that the Galois group is $C_4$. 

 As before, there exist two elements $\sigma, \tau \in \Aut(C)$ such that $$C /\langle\sigma \rangle \simeq E$$ and $$C/\langle\tau\rangle \simeq\mathbb P^1,$$ where $\tau$ has order two and $\sigma$ has order $d=4$.
It implies that there exists an elliptic curve $E' = C / \langle \sigma^{2} \rangle$ such that morphism from $C$ to $E$ factors through $E'$. We denote $G'=\langle\sigma^2 \rangle$, $G=\langle\sigma \rangle$ and $H'' = \langle \tau \rangle$.
  Also we have two subgroups $H = \langle\sigma, \tau\rangle \simeq C_4 \oplus C_2 $, $H' = \langle\sigma^{2}, \tau\rangle \simeq C_2 \oplus C_2$ of $\Aut(C)$ such that $C/H \simeq \mathbb P^{1}$ and $C/H' \simeq \mathbb P^{1}$.   

The following diagram illustrates the whole picture :

\begin{diagram}
&  &C      &&\\
&\ldTo && \rdTo &\\
 C/G' \simeq E' && && C/H'' \simeq \mathbb P^{1}\\
  \dTo &&\rdTo(2,2)       &&\dTo\\
  C/G \simeq E         &&&&C/H' \simeq \mathbb P^{1} \\
&\rdTo &&\ldTo\\
&& C/ H \simeq \mathbb P^{1} 
\end{diagram}

Consider the covering $C \to C/H \simeq \mathbb P^{1}$ of degree eight. Riemann-Hurwitz for this morphism tells us: 

$$ 2 = -16 + \sum_{p \in C} (e_p -1),$$
or equivalently $18 = \sum (e_p -1)$. Since degree of this covering is eight, possible ramification types are $(8)$, $(4,4)$ or $(2,2,2,2)$. Suppose we have $m_i$ points of $i$-th ramification type. Then $7m_1 + 6m_2 + 4m_3 = 18$, which has exactly the following list of solutions in non-negative integers: $(2,0,1)$, $(0,3,0)$, $(0,1,3)$. Riemann-Hurwitz for $C \to E$ gives us $2 = 0 + \sum (e_p-1)$ and therefore we have exactly one ramified point, it has ramification type $(2,2)$. Then solutions $(2,0,1)$ and $(0,3,0)$ are automatically excluded from our consideration. Finally, suppose we are in the case of $(0,1,3)$. We will show that Galois theory implies that there are at least two points with ramification index at least four. Indeed, if $p$ is ramified point for morphism $C/H' \to C/H$, then its inertia group $I_p \subset H \simeq C_4 \oplus C_2$ does not lie in the $H' \simeq C_2 \oplus C_2$. But then, it means it has an element of order at least four. The same time, Riemann-Hurwitz argument shows that there are exactly two points which ramify in the covering $C/H' \to C/H$ and therefore there should be at least two elements of ramification index at least four. 

\end{proof}

\newpage

\bibliography{mybib}{}
\bibliographystyle{plain}

\newpage

\tableofcontents

\end{document}